\newcommand{\ncom}{\newcommand}
\ncom{\dho}{\partial}
\ncom{\rar}{\rightarrow}
\ncom{\imply}{\Rightarrow}
\ncom{\lrar}{\longrightarrow}
\ncom{\into}{\hookrightarrow}
\ncom{\onto}{\twoheadrightarrow}
\ncom{\ov}{\overline}
\ncom{\m}{\mbox}
\ncom{\sta}{\stackrel}
\ncom{\invlim}{\varprojlim}
\ncom{\xhat}{\widehat}
\ncom{\vspc}{\vspace{3mm}}
\ncom{\End}{{\cE}nd}
\ncom{\tensor}{\otimes}
\ncom{\al}{\alpha}
\ncom{\cHom}{{\mathcal Hom}}
\ncom{\A}{{\mathbb A}}
\ncom{\comx}{{\mathbb C}}
\ncom{\E}{{\mathbb E}}
\ncom{\F}{{\mathbb F}}
\ncom{\G}{{\mathbb G}}
\ncom{\K}{{\mathbb K}}
\ncom{\Le}{{\mathbb L}}
\ncom{\N}{{\mathbb N}}
\ncom{\p}{{\mathbb P}}
\ncom{\Q}{{\mathbb Q}}
\ncom{\R}{{\mathbb R}}
\ncom{\Z}{{\mathbb Z}}
\ncom{\f}{\dfrac}
\ncom{\wtil}{\widetilde}
\ncom{\ci}{{\mathpzc i}}
\ncom{\cA}{{\mathcal A}}
\ncom{\cB}{{\mathcal B}}
\ncom{\cC}{{\mathcal C}}
\ncom{\cD}{{\mathcal D}}
\ncom{\cE}{{\mathcal E}}
\ncom{\cF}{{\mathcal F}}
\ncom{\cG}{{\mathcal G}}
\ncom{\cH}{{\mathcal H}}
\ncom{\cI}{{\mathcal I}}
\ncom{\cJ}{{\mathcal J}}
\ncom{\cK}{{\mathcal K}}
\ncom{\cL}{{\mathcal L}}
\ncom{\cM}{{\mathcal M}}
\ncom{\cN}{{\mathcal N}}
\ncom{\cO}{{\mathcal O}}
\ncom{\cP}{{\mathcal P}}
\ncom{\cQ}{{\mathcal Q}}
\ncom{\cR}{{\mathcal R}}
\ncom{\cS}{{\mathcal S}}
\ncom{\cT}{{\mathcal T}}
\ncom{\cU}{{\mathcal U}}
\ncom{\cV}{{\mathcal V}}
\ncom{\cW}{{\mathcal W}}
\ncom{\cX}{{\mathcal X}}
\ncom{\cY}{{\mathcal Y}}
\ncom{\cZ}{{\mathcal Z}}
\ncom{\cSU}{{\mathcal S \mathcal U}}
\ncom{\eop}{{\hfill $\Box$}}
\ncom{\isom}{\cong}
\theoremstyle{plain}
\newtheorem{theorem}{Theorem}[section]
\newtheorem{lemma}[theorem]{Lemma}
\newtheorem{proposition}[theorem]{Proposition}
\theoremstyle{definition}
\theoremstyle{remark}
\newtheorem{remark}[theorem]{Remark}
\long\def\comment#1{}
\newtheorem{exmp}[theorem]{Example}
\numberwithin{equation}{section}
\begin{document}

	\title{Quasilifting of hulls and depth of tensor product of modules}
	
	\author{Sutapa Dey}
	\address{Department of Mathematics, Indian Institute of Technology -- Hyderabad, 502285, India.}
	\email{ma20resch11002@iith.ac.in}

	\author{Amit Tripathi}
	\address{Department of Mathematics, Indian Institute of Technology -- Hyderabad, 502285, India.}
	\email{amittr@gmail.com}
	\subjclass[2010]{13D07, 13C14, 13C15}

	\begin{abstract} We investigate the depth of the tensor product of finitely generated modules over local rings. One of the main ingredients of our approach is a lifting construction introduced by Huneke, Jorgensen and Wiegand. We recover a result of Celikbas, Sadeghi, and Takahashi for local complete intersection rings. Additionally, we provide a negative answer to a question they asked and establish a corresponding lower bound. We derive a result on the depth of the tensor product of certain modules over local complete $\mathcal{TE}$ rings. Some general conditions on the existence of hulls and approximations are also studied.
	\end{abstract}
	
	\keywords{Tensor product, approximations, hulls, depth formula}
	\date{\today}
	\maketitle

	\section{Introduction}

	 The aim of this paper is to study the connection between bounds on the depth of tensor products of modules and the vanishing of Tor modules. This area traces its origin to the seminal work of Auslander \cite{A1}, who established that for an unramified regular local ring $R$, if $M \otimes_R N$ is torsion-free for some nonzero finitely generated modules $M$ and $N,$  then $M$ and $N$ are torsion-free, the pair $(M,N)$ is \textit{Tor-independent}, i.e., $Tor_i^R(M,N) =0 $ for all $i \geq 1,$ and further, the \textit{depth formula},  $depth_R( M \otimes_R N) = depth_R(M) + depth_R(N) - depth_R(R)$ holds. Later, Lichtenbaum \cite{Lic} generalized this result to the case of all regular local rings.  
	 
	 Huneke and Wiegand \cite[2.7]{HW} further extended these results to abstract hypersurface rings, showing that if $M \otimes_R N$ is reflexive, and either of these modules has constant rank, then the pair $(M,N)$ is Tor-independent and at least one of the $M$ and $N$ is reflexive (see  \cite[1.3]{CP}, \cite{HW3}). The depth formula holds in this case as well. Later, Huneke, Jorgensen, and Wiegand \cite{HJW} used quasilifting to establish Tor-vanishing results in codimensions 2 and 3 under additional conditions.
	 
	 Without any assumption on the modules or the ring, even totally reflexivity of $M \otimes_R N$ does not imply Tor-independence; see \cite[3.14]{C} for a counterexample over a codimension $2$ local complete intersection. 
	 
	 Recall that a local ring $(R,m)$ is a\textit{ complete intersection} if its $m$-adic completion is of the form $Q/(\underline{x})$ where $Q$ is a complete regular local ring and $\underline{x}$ is a regular sequence. Let $\lambda_R(M)$ denote the length of an $R$-module $M.$ In \cite{CST}, Celikbas, Sadeghi, and Takahashi proved the following 

\begin{theorem}\cite[1.2]{CST} \label{theorem_cst} Let $R$ be a local complete intersection ring, and let $M$ and $N$ be nonzero finitely generated $R$-modules satisfying:
	\begin{enumerate}[label=\normalfont(\alph*)]
		\item $\lambda(Tor_i^R(M,N)) < \infty$ for all $i \geq 1$ , and 
		\item $depth_R(M \otimes_R N)  \geq cx_R(M,N)+1.$
	\end{enumerate}
	Then \begin{enumerate}[label=\normalfont(\roman*)]
		\item $depth_R( M \otimes_R N) \geq depth_R(M) + depth_R(N)- depth_R(R),$ and 
		\item $depth_R( M \otimes_R N) = depth_R(M) + depth_R(N)- depth_R(R)$ if and only if the pair $(M,N)$ is Tor-independent. 
	\end{enumerate} 
\end{theorem}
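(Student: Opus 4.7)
My plan is to induct on the complexity $d = cx_R(M,N)$, using the quasilifting-of-hulls constructions that the paper develops as the main vehicle for the inductive step. After completing $R$, write $R = Q/(f_1, \ldots, f_c)$ with $Q$ regular local and $(f_1, \ldots, f_c)$ a $Q$-regular sequence; then $0 \leq d \leq c$, and hypothesis (b) reads $\text{depth}(M \otimes_R N) \geq d + 1$.

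\textbf{Base case} ($d = 0$). Complexity zero means $\text{Tor}_i^R(M,N) = 0$ for all $i \gg 0$. Together with the finite-length hypothesis (a), Jorgensen's Tor-rigidity theorem for complete intersections then forces $\text{Tor}_i^R(M,N) = 0$ for every $i \geq 1$. Auslander's classical depth formula thus gives $\text{depth}(M \otimes_R N) = \text{depth}(M) + \text{depth}(N) - \text{depth}(R)$; Tor-independence holds, and both directions of the ``iff'' statement are immediate.

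\textbf{Inductive step} ($d \geq 1$). The goal is to construct, via the paper's quasilifting results, a codimension $c - 1$ complete intersection $R'$ with $R = R'/(\bar g)$ for a non-zero-divisor $\bar g \in R'$, together with $R'$-modules $M'$, $N'$ quasilifting $M$ and $N$ so that
\[
cx_{R'}(M', N') = d - 1, \quad \lambda\bigl(\text{Tor}_i^{R'}(M', N')\bigr) < \infty, \quad \text{depth}_{R'}(M' \otimes_{R'} N') \geq d.
\]
I would choose $\bar g$ generically with respect to the support variety $V_R(M,N)$, so that exactly one direction of complexity is killed, and invoke the quasilift-of-hulls construction to lift $M,N$ in a way compatible with tensor products. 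The inductive hypothesis applied to $(M', N')$ over $R'$ yields the depth formula and the Tor-independence characterization over $R'$, which I would descend along $R' \to R$ using the exact sequence $0 \to R' \xrightarrow{\bar g} R' \to R \to 0$ and the change-of-rings spectral sequence.

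The hard part is the quasilifting step itself: simultaneously arranging all three transfer conditions---that complexity strictly drops by exactly one, that finite-length Tors persist to the lift, and that the depth inequality (b) is preserved over $R'$---requires precise control of the interaction between quasilifts and tensor products, which is what the paper's development of quasilifting of hulls is designed to provide. Preserving (b) is the most delicate, since passing to $R'$ can in principle lose depth; this is where the hull/approximation technology is essential. Once the reduction goes through, the ``equality implies Tor-independence'' direction of the conclusion follows by tracing equality in the depth inequality through the induction back to the base case, where Tor-independence is automatic; the reverse direction is Auslander's classical formula.
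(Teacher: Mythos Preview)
Your overall plan---induct on $cx_R(M,N)$ and pass to a deformation where complexity drops by one---matches the paper's strategy (the paper proves the more general Theorem~\ref{theorem_main_cst}, whose case (b)(ii) recovers this statement). But both your base case and your inductive step have genuine gaps.

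\textbf{Base case.} The claim that ``complexity zero plus finite-length Tor forces $Tor_i^R(M,N)=0$ for all $i\geq 1$ via Jorgensen rigidity'' is false as stated, and you have not used hypothesis (b) at all. Take $R=k[[x,y]]$, $M=R/(x)$, $N=k$: then $cx_R(M,N)=0$, $\lambda(Tor_1^R(M,N))=1<\infty$, $Tor_i^R(M,N)=0$ for $i\geq 2$, yet $Tor_1^R(M,N)=k\neq 0$. The paper organizes the argument differently: it first assumes the desired depth inequality fails, which yields the extra condition $depth(M)+depth(N)\geq depth(R)+cx_R(M,N)$; in the base case this gives $depth(N)\geq CI\mbox{-}dim\,M$, exactly the hypothesis needed for the acyclicity-lemma argument of Proposition~\ref{prop_depth_acyclicity} to force $Tor_i^R(M,N)=0$ for $i\geq 1$.

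\textbf{Inductive step.} The paper does \emph{not} quasilift both $M$ and $N$. After a faithfully flat base change $R\to R'$ and a choice of $Q$ with $R'=Q/(x)$ and $cx_Q(M',N')=cx_R(M,N)-1$ (Lemma~\ref{lemma_existence_ring}, via the Celikbas--Dao complexity reduction), the module $N$ is left untouched and simply viewed as a $Q$-module. What changes is $M$: from a projective hull of $M$ over $R'$, Proposition~\ref{prop_projective_hull} builds a $\cG$-approximation $0\to\wtil{X}\to\wtil{G}\to M\to 0$ over $Q$ with $\wtil{G}$ totally reflexive and $pd_Q\wtil{X}<\infty$, and induction is applied to the pair $(\wtil{G},N)$ over $Q$. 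The technical core is Lemma~\ref{lemma_change_of_ring_tor_vanish_geq_0}, which verifies $depth_Q(\wtil{G}\otimes_Q N)\geq cx_Q(\wtil{G},N)+1$, that $\lambda(Tor_i^Q(\wtil{G},N))<\infty$, and produces explicit isomorphisms $Tor_i^Q(\wtil{G},N)\cong Tor_i^Q(M,N)$ for $i\geq 2$ and $Tor_1^Q(\wtil{G},N)\cong Tor_1^R(M,N)$ for the descent. Your proposal to symmetrically lift both modules and then control $M'\otimes_{R'}N'$ for genuine quasilifts $M',N'$ is a different and substantially harder problem; the asymmetry---modify only $M$ via the hull/approximation, keep $N$---is precisely what makes the depth bound on the tensor product transferable.
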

	
	Recall that a ring $R$ is $\mathcal{TE}$ if for any pair $(M,N)$ of finite $R$-modules (see \cite[6.3]{AvBuch})
	\begin{align} \label{eqn_TE} 
		Tor_i^R(M,N) = 0  \text{ for all $i \gg 0$} \implies Ext^i_R(M,N) = 0 \text{ for all $i \gg 0.$}
	\end{align} 
	
 Avramov and Buchweitz \cite[III]{AvBuch} proved that local complete intersection rings are $\mathcal{TE}.$  Subsequent works by Huneke and Jorgensen \cite{HJ}, and Jorgensen and Sega \cite{JS} shows that we have strict inclusions $$\text{local complete intersection rings} \subsetneq \text{local $\mathcal{TE}$ rings} \subsetneq \text{local Gorenstein rings}.$$ 

More recently, Kimura, Lyle, Otake, and Takahashi \cite{KJYT} have studied some properties of local $\mathcal{TE}$ rings and their related homological cousins.

	Our main result is,
	\begin{theorem} \label{theorem_main_cst} 
		Let $R$ be a local ring, and let M and N be nonzero finitely generated $R$-modules. Assume that one of the following holds 	\begin{enumerate}[label=\normalfont(\alph*)]
			\item \label{item_TE} $R$ is a local complete $\mathcal{TE}$ ring with infinite residue field, $vpd_R (M) < \infty$ and $M$ is locally free on the punctured spectrum of $R.$ 
			\item \label{item_ci} $R$ is a local complete intersection ring and $\lambda(Tor_i^R(M,N)) < \infty$ for all $i \geq 1.$ 
		\end{enumerate} In addition, assume that $depth_R( M \otimes_R N) \geq cx_R(M,N) + 1.$

			Then \begin{enumerate}[label=\normalfont(\roman*)]
			\item $depth_R( M \otimes_R N) \geq depth_R(M) + depth_R(N)- depth_R(R),$ and 
			\item $depth_R( M \otimes_R N) = depth_R(M) + depth_R(N)- depth_R(R)$ if and only if the pair $(M,N)$ is Tor-independent. 
		\end{enumerate} 
	\end{theorem}

	The part \ref{item_ci} of Theorem \ref{theorem_main_cst} recovers Theorem \ref{theorem_cst} cited above, while part \ref{item_TE} is new. The proof of Theorem \ref{theorem_main_cst} is based on a quasilifting construction of Huneke, Jorgensen, and Wiegand \cite{HJW}, which allows ``lifting" of the setup to a pair of $Q$-modules where $Q$ is a quasi-deformation of $R.$ We then apply \cite[III]{AvBuch} and a result of Celikbas and Dao \cite[2.4]{CD} to obtain complexity reduction.

	One of the motivations behind this work was to answer a question \cite[3.10]{CST} which asked if one can replace the hypothesis ``$\lambda(Tor_i^R(M,N)) < \infty$ for all $i \geq 1$" with ``$\lambda(Tor_i^R(M,N)) < \infty$ for all $i \gg 0$" ? We construct an example (see Example \ref{eg_4}) to show that this is not true in this generality. However, with this relaxed hypothesis, we establish the following lower bound on the depth of tensor product:
	
	\begin{theorem}\label{theorem_syz} Let $R$ be a local complete intersection ring, and let $M$ and $N$ be nonzero finitely generated $R$-modules. Assume that 
		\begin{enumerate}[label=\normalfont(\alph*)]
			\item \label{item_a} $\lambda(Tor_i^R(M,N)) < \infty$ for all $i \gg 0.$
			\item $depth_R(M \otimes_R N) \geq cx_R(M,N) + 1.$
		\end{enumerate} Then $depth_R(M \otimes_R N) \geq syz_R(M) + syz_R(N) - depth_R(R).$  
	\end{theorem}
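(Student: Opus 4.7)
My approach is to reduce Theorem \ref{theorem_syz} to the already-established Theorem \ref{theorem_cst} by passing to sufficiently high syzygies of $M$ and $N$. Fix $n_0 \geq 1$ with $\lambda(Tor_i^R(M,N)) < \infty$ for all $i \geq n_0$, and choose integers $s, t \geq 0$ so that the shifted pair $M' := \Omega^s M$, $N' := \Omega^t N$ satisfies $\lambda(Tor_i^R(M', N')) < \infty$ for every $i \geq 1$. This is automatic once $s + t \geq n_0 - 1$, since the standard dimension-shifting isomorphism $Tor_i^R(\Omega^s M, \Omega^t N) \cong Tor_{i+s+t}^R(M,N)$ (valid for $i \geq 1$, up to projective summands) keeps the indices in the range where hypothesis (a) applies.

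Over a complete intersection, complexity is invariant under taking syzygies, so $cx_R(M', N') = cx_R(M, N)$. In order to invoke Theorem \ref{theorem_cst} for the pair $(M', N')$, I need to verify its depth hypothesis $depth(M' \otimes_R N') \geq cx_R(M', N') + 1$. I plan to establish this via a depth-lemma chase along the short exact sequences $0 \to \Omega^{j+1} M \to R^{\beta_j(M)} \to \Omega^j M \to 0$ (and the analogous ones for $N$) tensored with the appropriate syzygy of the other module, using hypothesis (b) of our theorem together with the fact that the Tor obstructions appearing in the middle of these tensored sequences have finite length for the relevant range of indices, which controls their depth contribution.

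Theorem \ref{theorem_cst} then yields
\[
depth(M' \otimes_R N') \geq depth(M') + depth(N') - depth(R).
\]
The final task is to translate this into the claimed bound for $depth(M \otimes_R N)$. Running the depth-lemma analysis in reverse through the syzygy exact sequences, the cumulative depth correction from undoing the $s+t$ syzygies should produce precisely $syz(M) + syz(N) - depth(R)$ on the right-hand side, after identifying $depth(\Omega^s M)$ and $depth(\Omega^t N)$ with the invariants $syz(M)$ and $syz(N)$ (plus the appropriate shift).

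The main obstacle I anticipate is this transfer step. Walking back through the long exact Tor sequences of the tensored syzygy resolutions involves finite-length Tor terms at every stage, and each depth-lemma application is in principle lossy. Controlling the cumulative loss to exactly $s+t$ -- so that the final bound lines up with $syz(M) + syz(N) - depth(R)$ rather than something strictly weaker -- will require using the rigidity of Tor over complete intersections (so that vanishing in one degree forces vanishing further on) combined with the finite-length hypothesis, ensuring that only the first few syzygy steps contribute nontrivially while the subsequent steps are essentially lossless.
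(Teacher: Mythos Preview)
Your approach has a real gap at step 3, and the obstacle you flag at step 5 is not repaired by the rigidity heuristic you invoke. For step 3: from the exact sequence $0 \to Tor_1^R(M,N) \to \Omega M \otimes_R N \to F \otimes_R N \to M \otimes_R N \to 0$, a lower bound on $depth(M \otimes_R N)$ gives \emph{no} lower bound on $depth(\Omega M \otimes_R N)$; in fact if $Tor_1^R(M,N)$ is nonzero of finite length then $depth(\Omega M \otimes_R N)=0$ outright, destroying the hypothesis you need. Since assumption (a) says nothing about low-index Tor, there is no mechanism to push the bound $depth(M\otimes_R N)\ge cx_R(M,N)+1$ forward to the pair $(M',N')$. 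For step 5: each ``undo syzygy'' step via the depth lemma can lose one, and you have taken $s+t\ge n_0-1$ with $n_0$ potentially much larger than $depth(R)$, so the cumulative loss swamps any useful bound; there is also no reason the answer should involve $syz(M),syz(N)$ rather than $s,t$. Tor-rigidity over complete intersections is not the statement that one vanishing forces all higher vanishing; the relevant result \cite[3.4]{CST} requires $cx_R(M,N)+1$ \emph{consecutive} vanishings of Tor as input, and your construction produces none.

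The paper proceeds quite differently. Assuming the conclusion fails yields $syz(M)+syz(N)\ge depth(R)+cx_R(M,N)$. The invariants $syz(M),syz(N)$ then enter not by replacing $M,N$ with their syzygies but through the equivalent Serre conditions $\wtil{S}_m,\wtil{S}_n$ (Theorem \ref{theorem_S_k}). Inside Lemma \ref{lemma_3_7_new} these conditions force the finite-projective-dimension module $X$ in a projective hull $0\to M\to X\to G\to 0$ to be itself an $m$-th syzygy; since $N$ is an $n$-th syzygy and $m+n\ge\dim R$, one obtains $Tor_i^R(X,N)=0$ for all $i\ge1$ directly by Auslander--Buchsbaum. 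Iterating the hull/pushforward construction $c=cx_R(M,N)$ times produces a module $L$ with $CI\text{-}dim_R L=0$, $Tor_i^R(L,N)=0$ for $1\le i\le c+1$, and $Tor_{i+c+1}^R(L,N)\cong Tor_i^R(M,N)$ for $i\ge1$. Now \cite[3.4]{CST} applies to $(L,N)$ and gives $Tor_i^R(M,N)=0$ for all $i\ge1$, whence the depth formula yields the desired inequality.
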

	
	Here by $syz_R(X),$ we mean the largest integer $t$ less than or equal to $depth_R(R),$ such that $X$ is a $t$'th syzygy module. 
	
	
  	We now describe the organization of this paper. Section \ref{section_two} contains notations and some preliminary results. In section \ref{section_approx}, we discuss some results concerning lifting of hulls and approximations. A proof of Theorem \ref{theorem_main_cst} is provided in section \ref{sec_main}. The last section is devoted to Example \ref{eg_4} and the proof of Theorem \ref{theorem_syz}.  
	
	\section{Preliminaries}  \label{section_two} 
	
	Throughout, all rings are commutative Noetherian rings, and modules are assumed to be finitely generated. For any $R$-module $X,$ we denote $Hom_R(X,R)$ by $X^*.$

\subsection{Approximations and hulls} \label{section_approx_hull} 	See \cite{Xu} and \cite{CPST}. Let $mod(R)$ be the category of finitely generated $R$-modules. Let $\cD$ and $\cE$ be any two subcategories of $mod(R).$  
 Following  \cite[2.1]{CPST}, we define \begin{align}
	\label{defn_perp}	\cD^{\perp} &= \{Y \in mod(R)\ | \ Ext^i_R(X,Y) = 0 \text{ for all } X \in \cD \text{ and all } i > 0\} 
\end{align} We say that a module $M \in mod(R)$ \textit{admits a $\cD$-approximation} if there are modules $X \in \cD$ and $Y \in \cD^{\perp}$ satisfying a short exact sequence \begin{align} \label{eqn_G_approx} 
0 \rar Y \rar X \rar M \rar 0.
\end{align} The definition \ref{defn_perp}  and the notion of $\cD$-approximation of a module  were studied in \cite{CPST}. It follows from  \cite{AM} that any $M \in mod(R)$ with finite $G\mbox{-}dim_R(M) < \infty,$ admits a $\cG(R)$-approximation where $\cG(R)$ is the full subcategory of $mod(R)$ of totally reflexive $R$-modules. 

Analogously, we define
\begin{align}
	\label{defn_top}	\cE^{\top} & =  \{X\in mod(R)\ | \ Ext^i_R(X,Y) = 0 \text{ for all } Y \in \cE \text{ and all } i > 0\}
\end{align} 

An \textit{$\cE$-hull of a module $M \in mod(R)$} is a short exact sequence \begin{align}
	0 \rar M \rar X \rar Y \rar 0 
\end{align} where $X \in \cE$ and $Y \in \cE^T.$ We say that $\cD$ is \textit{closed under extension} if for any short exact sequence of $R$-modules $0 \rar Y' \rar Y \rar Y'' \rar 0$ with $Y', Y'' \in \cD,$ it follows that $Y \in \cD.$ We note the following properties that are easy to verify. 

\begin{enumerate}[label=\normalfont(\alph*)]
	\item \label{item_a1} If $\cD' \subseteq \cD,$  then $\cD^{\perp} \subseteq \cD'^{\perp}$ and $\cD^{\top} \subseteq \cD'^{\top}.$ 
	\item \label{item_d} $\cD\subseteq \cD^{\perp \top}$ and $\cE\subseteq \cE^{\top \perp}$
	\item \label{item_e} $\cD^{\perp}$ and $\cE^{\top}$ are closed under extension.
	\item \label{item_c} If $0 \rar X \rar Y \rar Z \rar 0$ is a short exact sequence such that $X,Y \in \cD^{\perp},$ then $Z\in \cD^{\perp}.$
\end{enumerate}

		\subsection{Gorenstein dimension} \label{section_gorenstein} See  \cite{AB}, \cite{Av1},  and \cite{AGP}. We say that a finitely generated $R$-module $M$ is totally reflexive if the natural map $M \rar M^{**}$ is bijective, and $Ext^i_R(M,R) = Ext^i_R(M^*, R) = 0$ for all $i \geq 1.$ The smallest integer $n$ (if there is any) for which there exists an exact sequence $0 \rar G_n  \rar \cdots \rar G_{0} \rar M \rar 0,$ where each $G_i$ is totally reflexive, is called the \textit{Gorenstein dimension} of $M.$ We express this as $G\mbox{-}dim_R(M) = n.$ By convention, $G\mbox{-}dim_R(0)= -\infty.$ 
		
		Let $R$ be a local ring. Let $M$ be a finitely generated $R$-module. Assume $G\mbox{-}dim_R(M) < \infty.$  We recall some properties
		\begin{enumerate}[label=\normalfont(\alph*)]
			\item \label{item_proj_hull} (Projective hull) There is an exact sequence $0 \rar M \rar X \rar G \rar 0$ of $R$-modules such that $G\mbox{-}dim_R(G) = 0$ and $pd_R(X) = G\mbox{-}dim_R(M)$; see \cite[2.17]{CFH}.
			\item \label{item_AB} (Auslander-Bridger formula) $G\mbox{-}dim_R(M) + depth_R(M) = depth_R(R).$ In particular, $G\mbox{-}dim_R(M) \leq depth_R(R);$  see \cite{AB}.
		\end{enumerate}
		
		\subsection{Virtual projective dimension and complete intersection dimension} \label{section_gorenstein_com} See  \cite{Av1},  and \cite{AGP}. 
		Let $(R,m)$ be a local ring. A surjective map $Q \rar R'$ of rings such that its kernel is generated by a length $c,$ $Q$-regular sequence, is called a (codimension $c$) \textit{deformation} of $R'.$ A diagram of local ring maps $R \rar R' \leftarrow Q$ such that $Q \rar R'$ is a deformation and $R \rar R'$ is faithfully flat is called a \textit{quasi-deformation} of $R.$ The \textit{complete intersection dimension} of $M$ is 
		$$CI\mbox{-}dim_R(M) = \inf \left\{pd_Q(M \otimes_R R') - pd_Q(R')| R \rar R' \leftarrow Q \text{ is a quasi-deformation}\right\}.$$ 
		
		Let $R$ be a local ring with infinite residue field. For such rings, the \textit{virtual projective dimension} \cite[3.3]{Av1} of a module $M$ is  $$vpd_R(M) = \min \{pd_P(M \otimes_{R} \widehat{R})\ | \ P \text{ is a deformation of } {\widehat{R}}\},$$ where $\widehat{R}$ is the $m$-adic completion of $R.$ 
	
		Let $M$ and $N$ be finitely generated $R$-module such that $CI\mbox{-}dim_R(M) < \infty.$ 
		\begin{enumerate}[label=\normalfont(\roman*)]
		\item \label{item_tor_g_dim_0} If $Tor_i^R(M,N) = 0$ for all $i \gg 0,$ then $Tor_i^R(M,N) = 0$ for all $i \geq CI\mbox{-}dim_R(M) + 1.$ Similarly, if $Ext_R^i(M,N) = 0$ for all $i \gg 0,$ then $Ext_R^i(M,N) = 0$ for all $i \geq CI\mbox{-}dim_R(M) + 1$ \cite[4.7, and 4.9]{AvBuch}. 
	
		\item   \label{item_ci_dim_0} Let $G$ be an $R$-module such that  $CI\mbox{-}dim_R(G) = 0.$ Let $N$ be an $R$-module such that $\lambda(Tor_i^R(G,N)) < \infty$ for all $i \gg 0.$ Since localization doesn't increase the complete intersection dimension (see \cite[1.6]{AGP}), we conclude from \ref{item_tor_g_dim_0}  that $\lambda(Tor_i^R(G,N)) < \infty$ for all $i \geq 1.$
		\item \label{item_sather} If $0 \rar X' \rar X \rar X'' \rar 0$ is an exact sequence of $R$-modules and one of the modules has finite complete intersection dimension while one of the remaining two has finite projective dimension, then the third module has finite $CI$ dimension \cite[3.6]{Sather}.
		\item\label{item_AvBuch} Let $R$ be a local complete intersection ring and let $M$ and $N$ be finitely generated $R$-modules $M$ and $N.$ Then it follows by \cite[III]{AvBuch}  that $$Ext^i_R(M,N) = 0 \text{ for } i \gg 0 \iff Tor_i^R(M,N) = 0 \text{ for all } i \gg 0.$$ 
		\item \label{item_vpd_finite}    If $vpd_R(M) < \infty,$ then $CI\mbox{-}dim_R(M) < \infty;$ see \cite[5.11]{AGP}. 
	\end{enumerate}
	
	\subsection{A long exact sequence} Let $R = Q/(x)$ where $Q$ is a local ring and $x$ is a non zero-divisor on $Q.$ Given finitely generated $R$-modules $M$ and $N,$ the change of ring spectral sequence  \cite[5.6.6]{W}  collapses to a long exact sequence 
	\begin{align} \label{seq_change}
		\cdots	\rar Tor_3^Q(M,N)\rar Tor_3^R(M,N) \rar Tor_1^R(M,N) \rar Tor_2^Q(M,N) \rar \nonumber  \\
		Tor_2^R(M,N) \xrightarrow{\Psi}  Tor_0^R(M,N) \rar Tor_1^Q(M,N) \rar Tor_1^R(M,N) \rar 0.
	\end{align}

	\subsection{Complexity} \label{sec_cmpx} The complexity of a sequence of non-negative integers $B = \{b_i\}_{i \geq 0}$ is defined as \cite{Av1} $$cx(B) = inf \{r \in \N \cup \{0\} \ |\ b_n \leq \alpha \cdot n^{r-1} \text{ for some  $\alpha  \in \R$ and for all } n\gg 0\}.$$ Following \cite{AvBuch},  the complexity $cx_R(M,N)$ of a pair $(M,N)$  of $R$-modules is defined as $$cx_R(M,N) = cx\left(\nu_R(Ext^i_R(M,N))\right),$$ where $\nu_R(X)$ denotes the minimal number of generators of a finitely generated $R$-module $X.$ 
%

	\subsection{The depth formula} \label{section_depth_formula} We say that two finite $R$-modules $M$ and $N$ satisfy the \textit{depth formula} if $depth_R(M) + depth_R(N)= depth_R(M \otimes_R N) + depth_R(R).$ Auslander \cite{A1} proved it for Tor-independent pairs where one of the modules has finite projective dimension. We refer to  \cite{I} or \cite{ArYos}  for generalization to the case where one of the modules has finite complete intersection dimension. For some recent generalizations, see \cite{BerJo2,CJ,KJYT}.

\section{Approximations and Hulls}  \label{section_approx}

Let $R$ be a ring and let $mod(R)$ be the category of finitely generated $R$-modules. Let  $\cG(R)$ and $\wtil{\cG}(R)$ be the full subcategories of totally reflexive $R$-modules and $R$-modules with finite Gorenstein dimension, respectively. Let $\cP(R)$ and $\wtil{P}(R)$  be the full subcategories of finitely generated projective $R$-modules and finitely generated $R$-modules with finite projective dimension, respectively. Recall that we have (see, for instance, \cite[23]{Masek}) \begin{align} \label{eqn_approx_hull}
\cG(R) =	{\cP}(R)^{\top} \cap \wtil{\cG}(R) = 	\wtil{\cP}(R)^{\top} \cap \wtil{\cG}(R).
\end{align} Furthermore, if $R$ is a Gorenstein ring, then $\cG(R) = {\cP}(R)^{\top}  =  \wtil{\cP}(R)^{\top}.$ The result that follows shows an analogous equality corresponding to the $\perp$ operation. 

\begin{lemma} \label{example_1} 
With notation as above, we have $$\cG(R)^{\perp} \cap \wtil{\cG}(R) = \wtil{\cP}(R).$$
In particular, if $R$ is a Gorenstein ring, then $\cG(R)^{\perp} = \wtil{\cP}(R).$
\end{lemma}
\begin{proof}
	 It follows from \cite[21,23]{Masek} that $\wtil{\cP}(R) \subseteq \cG(R)^{\perp} \cap \wtil{\cG}(R).$ Conversely, suppose $X \in \cG(R)^{\perp}\cap \wtil{\cG}(R).$ Let $F_0 \rar X$ be a surjection onto $X$ from a free $R$-module $F_0.$ Set $\Omega^1_RX = Ker(F_0 \rar X).$ For any $G \in \cG(R),$ the Auslander's transpose $Tr_R(G^*) \in \cG(R);$ see \cite[4.9]{AB}. In particular, $Ext^i_R(Tr_R(G^*), X) = 0 \text{ for } i = 1,2.$ Thus, by the 4-term sequence in \cite[2.8]{AB}, the natural map $G^* \otimes_R X \rar Hom_R(G^{**}, X)$ is an isomorphism. 
	
	Therefore, it follows from the sequence $0 \rar \Omega^1_R X \rar F_0 \rar X \rar 0$ that the map $Hom(G^{**}, F_0) \rar Hom(G^{**}, X)$ is surjective. Since $G$ is totally reflexive, we get that $Ext^1_R(G, \Omega^1_RX) = 0.$ The long exact Hom sequence shows that $\Omega^1_RX \in \cG(R)^{\perp} \cap \wtil{\cG}(R).$ 
	
	Inductively, we conclude $\Omega^r_RX \in \cG(R)^{\perp} \cap \wtil{\cG}(R)$ for all $r \geq 1.$ Taking $r$ large enough, we may assume that $\Omega^r_RX$ is a totally reflexive $R$-module. Consider the short exact sequence $0 \rar \Omega^{r+1}_RX \rar F_r \rar \Omega^r_RX \rar 0$ for $r$ sufficiently large. Since $\Omega^r_RX \in \cG(R) \cap \cG(R)^{\perp},$ we get $Ext^1_R(\Omega^r_RX, \Omega^{r+1}_RX) = 0,$ which proves that $\Omega^r_RX$ is projective, i.e. $X \in \wtil{P}(R).$ 
	
	The last assertion follows from the fact that for a Gorenstein ring, $\wtil{\cG}(R) = mod(R);$ see \cite[4.15, 4.20]{AB}. 
\end{proof}

Let $\varphi: Q \rar R$ be a homomorphism of rings which induces a finitely generated $Q$-module structure on $R.$ In particular, any module $M \in mod(R)$ can also be viewed as a module in $mod(Q).$ For a subcategory $\cC$ of $mod(R),$ we will use $\cC_Q,$ when we want to consider the modules in $\cC$ as $Q$-modules. For instance, $(\cC_Q)^{\top} =  \{X\in mod(Q)\ | \ Ext^i_Q(X,Y) = 0 \text{ for all } Y \in \cC \text{ and all } i > 0\}.$ 

\begin{lemma} \label{example_2}  
Let $Q$ be a Gorenstein local ring. Let $Q \rar R$ be a homomorphism of local rings inducing a finitely generated $Q$-module structure on $R.$ Assume that $pd_Q(R) < \infty.$ Then $$\cG(Q) = ({\cP}(R)_Q)^{\top} = (\wtil{\cP}(R)_Q)^{\top}.$$ 
\end{lemma}
\begin{proof} Since ${\cP}(R)_Q \subseteq  \wtil{\cP}(Q),$ by \ref{section_approx_hull}\ref{item_a1}, we have an inclusion $$\cG(Q) = \wtil{\cP}(Q)^{\top}  \subseteq (\wtil{\cP}(R)_Q)^{\top},$$ where the first equality follows from equation \eqref{eqn_approx_hull} and the assumption on $Q.$
	
Conversely, suppose $M$ be a $Q$-module  $({\cP}(R)_Q)^{\top},$ i.e., $Ext^i_Q(M,R) = 0$ for $i > 0.$ Since $pd_Q(R) < \infty,$ by \cite[3.1.25]{BH}, $inj\mbox{-}dim_Q(R) < \infty.$ By a result of Ischebeck \cite[3.1.24]{BH}, we get $depth_Q (M)= depth_Q (Q),$ that is, $M \in \cG(Q).$ 
	
\end{proof}

	
	

	
	


Let $\cB$ be a subcategory of $mod(Q)$ and $\cC$ be a subcategory of $mod(R).$ We say that an $R$-module $M$ admits a $\cB$-\textit{approximation} or $\cB$-\textit{hull} if it does so as a $Q$-module.  We say that $\cC$ admits \textit{ $\cB$-approximation} (resp., \textit{ $\cB$-hull}) if any $M \in \cC$ admits a $\cB$-approximation (resp., { $\cB$-hull}).  The following result explores some general conditions under which approximation/hulls can be {lifted} from $R$ to $Q.$  

\begin{proposition} \label{prop_approx_hull} 
	
	Let $Q \rar R$ be a ring homomorphism that makes $R$ a finitely generated $Q$-module. Let $M$ be a finitely generated $R$-module. Let  $\cB \subseteq mod(Q)$ be a subcategory of $mod(Q).$ Let $\cC$ and $\cD$ be subcategories of $mod(R).$
	\begin{enumerate}[label=\normalfont(\alph*)]
		\item \label{item_hull} Assume that $\cB \subseteq (\cC_Q)^{\top}.$  If $(\cC_Q)^{\top}$ admits $\cB$-approximation, then any $\cC$-hull  of $M$ determines a $\cB^{\perp}$-hull of $M$ as a $Q$-module.
		\item  \label{item_approx} Assume that $(\cD_Q)^{\perp} \subseteq \cB^{\perp}.$  If  $\cD_Q$ admits $\cB$-approximation (resp. $\cB^{\perp}$-hull), then any $\cD$-approximation of $M$ on $R$ determines a $\cB$-approximation (resp. $\cB^{\perp}$-hull) of $M$ on $Q.$
	\end{enumerate}  
\end{proposition}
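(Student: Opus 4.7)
Both parts will follow from a pullback/pushout construction combined with the closure properties of $(\cdot)^\perp$ and $(\cdot)^\top$ listed in Section~\ref{section_approx_hull}; the ones I need are that $\mathcal{B}^\perp$ is closed under extensions and that $\mathcal{B} \subseteq \mathcal{B}^{\perp\top}$. Throughout I freely view an $R$-module as a $Q$-module through the map $Q \to R$.

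\emph{Part (a).} Let $0 \to M \to X \to Y \to 0$ be the given $\mathcal{C}$-hull, so $X \in \mathcal{C}$ and $Y \in \mathcal{C}^\top$. Since $\mathcal{C}^\top$ admits a $\mathcal{B}$-approximation, there is a short exact sequence of $Q$-modules $0 \to Y' \to B \to Y \to 0$ with $B \in \mathcal{B}$ and $Y' \in \mathcal{B}^\perp$. I would form the pullback $Z = X \times_Y B$ in $mod(Q)$ and read off the pair of short exact sequences
\begin{align*}
0 \to Y' \to Z \to X \to 0, \qquad 0 \to M \to Z \to B \to 0,
\end{align*}
the second being my candidate $\mathcal{B}^\perp$-hull. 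The cokernel lies in $\mathcal{B} \subseteq (\mathcal{B}^\perp)^\top$ automatically. For $Z \in \mathcal{B}^\perp$: the hypothesis $\mathcal{B} \subseteq (\mathcal{C}_Q)^\top$ says $Ext^i_Q(A, X) = 0$ for every $A \in \mathcal{B}$, so $X \in \mathcal{B}^\perp$ over $Q$; combining with $Y' \in \mathcal{B}^\perp$ in the first sequence, extension-closure of $\mathcal{B}^\perp$ forces $Z \in \mathcal{B}^\perp$.

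\emph{Part (b).} Take a $\mathcal{D}$-approximation $0 \to Y \to X \to M \to 0$. For the approximation statement, splice it with a $\mathcal{B}$-approximation $0 \to W \to B \to X \to 0$ of $X$ on $Q$: the kernel $K$ of the composite surjection $B \twoheadrightarrow X \twoheadrightarrow M$ fits into $0 \to W \to K \to Y \to 0$. Since $W \in \mathcal{B}^\perp$ and $Y \in \mathcal{D}^\perp \subseteq \mathcal{B}^\perp$, extension-closure delivers $K \in \mathcal{B}^\perp$, and $0 \to K \to B \to M \to 0$ is the desired $\mathcal{B}$-approximation of $M$ over $Q$. For the hull variant, replace the approximation of $X$ by its $\mathcal{B}^\perp$-hull $0 \to X \to B' \to L \to 0$ and form the pushout $P$ of $X \to M$ and $X \hookrightarrow B'$; the standard pushout sequences
\begin{align*}
0 \to M \to P \to L \to 0, \qquad 0 \to Y \to B' \to P \to 0,
\end{align*}
combined with $Y, B' \in \mathcal{B}^\perp$, give $P \in \mathcal{B}^\perp$ by extension-closure, while $L \in (\mathcal{B}^\perp)^\top$ is already in hand, so the first sequence is the $\mathcal{B}^\perp$-hull of $M$ on $Q$.

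\emph{Expected obstacle.} The argument is a mechanical diagram chase and I do not expect a genuine obstruction. The only point requiring constant attention is that $\mathcal{B}^\perp$ is computed inside $mod(Q)$, so at each step the $R$-modules $X$ and $Y$ must be recalled as $Q$-modules; this is precisely what the two containment hypotheses $\mathcal{B} \subseteq (\mathcal{C}_Q)^\top$ and $\mathcal{D}^\perp \subseteq \mathcal{B}^\perp$ are designed to make automatic.
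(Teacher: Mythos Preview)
Your proof is correct and follows essentially the same pullback/pushout route as the paper's argument, with the same use of the closure properties from Section~\ref{section_approx_hull}. One small correction: in the hull variant of part~(b), the deduction $P \in \mathcal{B}^\perp$ from the sequence $0 \to Y \to B' \to P \to 0$ is not extension-closure (which would give the middle term from the outer two) but rather property~\ref{section_approx_hull}\ref{item_c}: if the first two terms of a short exact sequence lie in $\mathcal{B}^\perp$, so does the third. You should add this property to the list of closure facts invoked at the start of your proof.
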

We note that by \ref{section_approx_hull}\ref{item_a1}, \ref{section_approx_hull}\ref{item_d},  and the hypothesis assumed in part \ref{item_hull}, it follows that \begin{align} \label{eqn_C_contained_in_B_perp} 
	\cC_Q \subseteq \cB^{\perp}.
\end{align} Thus, Proposition \ref{prop_approx_hull}\ref{item_hull} gives a larger class for which a hull of $M$ exists. 
\begin{proof}[Proof of proposition \ref{prop_approx_hull}]
	
	Let $0 \rar M \rar X \rar G \rar 0$ be a $\cC$-hull of $M.$ By construction, $G \in \cC^{\top}$ and  by  \ref{item_hull}, it admits a $\cB$-approximation $0 \rar Y' \rar G' \rar  G \rar 0.$ Consider the following pullback diagram
	\begin{equation} \label{dgm_lift_hull}
		\begin{gathered}
			\xymatrix@C-=4.2cm@R-=2.2cm{
				& & Y' \ar@{=}[r]\ar[d] & Y' \ar[d] & & \\ 0 \ar[r] & M \ar[r]\ar@{=}[d] & X' \ar[d] \ar[r] & G' \ar[d] \ar[r] & 0 \\ 0 \ar[r] & M \ar[r] & X \ar[r] & G \ar[r] & 0 }
		\end{gathered} 
	\end{equation} By \eqref{eqn_C_contained_in_B_perp}, $X \in \cB^{\perp}.$ By closure under extension \ref{section_approx_hull}\ref{item_e}, it follows that  $X' \in \cB^{\perp}.$ Furthermore, $G' \in \cB \subseteq \cB^{\perp \top}.$ Thus, \ref{item_hull} follows by observing that the exact sequence $0 \rar M \rar X' \rar G' \rar 0$ is a $\cB^{\perp}$-hull of $M$ on $Q.$
	
	Now, let $0 \rar X \rar G \rar M \rar 0$ be a $\cD$-approximation of $M.$ Here $G \in \cD,$ and $X \in \cD^{\perp}.$  By the first hypothesis assumed  in \ref{item_approx}, $G$ admits a $\cB$-approximation $0 \rar X' \rar G' \rar  G \rar 0,$ where $G' \in \cB,$ and $X' \in \cB^{\perp}.$ Consider the following pullback diagram
	\begin{equation} \label{dgm_lift_approximation}
		\begin{gathered}
			\xymatrix@C-=4.2cm@R-=2.2cm{
				& X'  \ar@{=}[r]\ar[d] & X' \ar[d] & & \\ 0 \ar[r] & Y' \ar[r]\ar[d] & G' \ar[d] \ar[r] & M\ar@{=}[d] \ar[r] & 0 \\ 0 \ar[r] & X \ar[r] & G \ar[r] & M \ar[r] & 0 }
		\end{gathered} 
	\end{equation} Since $X \in \cD^{\perp} \subseteq \cB^{\perp}$ and $X' \in \cB^{\perp},$ by closure under extension \ref{section_approx_hull}\ref{item_e}, $Y' \in \cB^{\perp}.$ Thus, the  sequence $0\rar  Y' \rar G' \rar M \rar 0$ is a $\cB$-approximation of $M,$ which proves the first claim of \ref{item_approx}. 
	
	To prove the second claim in \ref{item_approx}, consider the following diagram, where the middle vertical sequence is a $\cB^{\perp}$-hull of $G \in \cD.$ 
	\begin{equation} \label{dgm_lift_approximation_1}
		\begin{gathered}
			\xymatrix@C-=4.2cm@R-=2.2cm{
				0 \ar[r] & X \ar@{=}[d]\ar[r] & G \ar[d] \ar[r] & M \ar[r] \ar[d] & 0 \\ 0 \ar[r] & X \ar[r]& X' \ar[d] \ar[r] & Y'\ar[d] \ar[r] & 0 \\  & & G' \ar@{=}[r] & G'&}
		\end{gathered} 
	\end{equation} Here  $Y' = coker(X \rar X').$  In the middle horizontal sequence, we have $X \in (\cD_Q)^{\perp} \subseteq \cB^{\perp}$ and $X' \in \cB^{\perp},$ by \ref{section_approx_hull}\ref{item_c}, $Y' \in \cB^{\perp}.$ Thus, the right vertical sequence is a $\cB^{\perp}$-hull of $M.$ 
\end{proof}

\begin{exmp} Let $R$ be a Gorenstein local ring such that $R = Q/(\underline{x})$ where $Q$ is a local ring and $\underline{x}$ is a regular sequence on $Q.$ Set $\cC = \wtil{\cP}(R),$  $\cD = \cG(R),$ and $\cB = \cG(Q).$ By Lemmas \ref{example_1} and \ref{example_2}, $$\cB = (\cC_Q)^{\top},\ \ \cD^{\perp} = \wtil{\cP}(R) \subseteq \cB^{\perp} = \wtil{\cP}(Q),\ \text{ and } \cC^{\top}= \cG(R) = \cD.$$ Thus, by Proposition \ref{prop_approx_hull}, any projective hull of $M$ on $R$ determines a projective hull of $M$ on $Q.$ Similarly, any $\cG(R)$-approximation of $M$ determines a $\cG(Q)$-approximation and a projective hull of $M$ on $Q.$
\end{exmp}

	  Recall that any finite $R$-module with $G\mbox{-}dim_R(M) < \infty$ admits a projective hull and a $\cG(R)$-approximation; see \ref{section_gorenstein}\ref{item_proj_hull} and \ref{section_approx_hull}\eqref{eqn_G_approx}. We need the following result, which shows that any projective hull of a finitely generated module $M$ on a ring $R = Q/(x),$ determines a $\cG(Q)$-approximation of $M$ on $Q.$ This approximation is used in the inductive step of Theorem \ref{theorem_main_cst}. 
	
	\begin{proposition}\label{prop_projective_hull} 	Let $(Q,n)$ be a local ring and let $x \in n$ be a regular element. Let $R = Q/(x)$ and let $M$ be finitely generated $R$-module with $G\mbox{-}dim_Q(M) < \infty.$ Let  $$0 \rar M \rar X \rar G \rar 0$$ be a projective hull of $M$ on $R.$ Then there exist $Q$-modules $\wtil{G}$ and $\wtil{X}$ such that $$0 \rar \wtil{X} \rar \wtil{G} \rar M \rar 0$$ is a $\cG$-approximation of $M$ on $Q,$ and $pd_Q(\wtil{X}) = G\mbox{-}dim_R(M).$  Moreover, if $M$ has locally finite projective dimension on the punctured spectrum of $R$ then $\wtil{G}$ has locally finite projective dimension on the punctured spectrum of $Q.$ 
	\end{proposition}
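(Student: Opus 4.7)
The plan is to exploit the codimension one deformation $Q \rar R$ to transfer the data in the projective hull $\eta$ of $M$ over $R$ to a $\cG$-approximation of $M$ over $Q$, and then verify the punctured spectrum claim by localization.

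First, I would establish that $M$ has finite Gorenstein dimension over $Q$, equal to $G\mbox{-}dim_R M + 1$. Since $x$ is $Q$-regular, $pd_Q R = 1$. The projective hull property gives $pd_R X = G\mbox{-}dim_R M =: n$, so by standard change of rings $pd_Q X = n+1 < \infty$; and $G\mbox{-}dim_R G = 0$ upgrades to $G\mbox{-}dim_Q G \leq 1$. Viewing $\eta$ as a short exact sequence of $Q$-modules, \ref{section_gorenstein_com}\ref{item_finite_G_dim} then gives $G\mbox{-}dim_Q M < \infty$. Since $xM = 0$ forces $depth_Q M = depth_R M$ while $depth (Q) = depth (R) + 1$, applying the Auslander-Bridger formula \ref{section_gorenstein_com}\ref{item_AB} over both rings yields $G\mbox{-}dim_Q M = G\mbox{-}dim_R M + 1 = n+1$. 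The standard existence theorem for $\cG$-approximations of modules of finite Gorenstein dimension (the Auslander-Buchweitz counterpart of \ref{section_gorenstein_com}\ref{item_proj_hull}) then produces a short exact sequence $0 \rar \wtil{X} \rar \wtil{G} \rar M \rar 0$ with $\wtil{G} \in \cG(Q)$ and $pd_Q \wtil{X} = G\mbox{-}dim_Q M - 1 = n = G\mbox{-}dim_R M$. Explicitly, such an approximation can be assembled from $\eta$ via a horseshoe-type splicing with a $\cG(Q)$-approximation $0 \rar F_G \rar \wtil{G}_0 \rar G \rar 0$ of $G$ on $Q$ (where $F_G$ is free since $G\mbox{-}dim_Q G \leq 1$).

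For the punctured spectrum claim, I would localize the approximation at an arbitrary prime $q \in Spec \ Q$ with $q \neq n$ (the maximal ideal of $Q$). If $x \notin q$, then $R_q = 0$, so $M_q = 0$, and the sequence collapses to $\wtil{G}_q \isom \wtil{X}_q$; since $\wtil{X}_q$ has finite projective dimension over $Q_q$ while $\wtil{G}_q \in \cG(Q_q)$ by \ref{section_gorenstein_com}\ref{item_localize}, and a totally reflexive module of finite projective dimension over a local ring must be free, we conclude $\wtil{G}_q$ is free. If $x \in q$, put $p = q/(x) \in Spec \ R$; since $q \neq n$, we have $p \neq m$, so by hypothesis $M_p$ is $R_p$-free, giving $pd_{Q_q} M_q = pd_{Q_q} M_p \leq pd_{Q_q} R_p = 1 < \infty$. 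The localized approximation now forces $\wtil{G}_q$ to have finite projective dimension over $Q_q$, and being totally reflexive it is free.

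The principal obstacle is arranging the precise value $pd_Q \wtil{X} = G\mbox{-}dim_R M$: the bare finiteness of $G\mbox{-}dim_Q M$ yields some $\cG$-approximation with $\wtil{X}$ of finite projective dimension, but realizing the exact equality requires the sharp form of the Auslander-Buchweitz approximation theorem (or, equivalently, the explicit construction from $\eta$ sketched above, whose kernel pd can be tracked through the horseshoe splicing using $pd_Q X = n+1$ and $F_G$ free).
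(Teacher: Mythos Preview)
Your proposal is correct but takes a genuinely different route from the paper. The paper builds $\wtil{G}$ and $\wtil{X}$ by hand: choose a free $R$-cover $F_0 \onto X$, lift it to a free $Q$-module $\wtil{F}_0$, and set $\wtil{G} = \ker(\wtil{F}_0 \rar G)$, $\wtil{X} = \ker(\wtil{F}_0 \rar X)$. Total reflexivity of $\wtil{G}$ and the value of $pd_Q \wtil{X}$ are then read off from the depth lemma and the Auslander--Bridger formula. You instead first establish $G\mbox{-}dim_Q M = G\mbox{-}dim_R M + 1$ via change of rings along $Q \rar R$, and then invoke the abstract Auslander--Buchweitz existence of a $\cG(Q)$-approximation, which automatically has kernel of projective dimension $G\mbox{-}dim_Q M - 1$. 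The punctured-spectrum argument is essentially identical in both approaches. Your route is cleaner and more conceptual for the proposition in isolation; the paper's explicit construction, however, produces a specific commutative diagram relating $\wtil{G}, \wtil{X}, \wtil{F}_0, X, G, M$, and that diagram is invoked repeatedly in the proof of the subsequent Lemma (the Tor comparison between $\wtil{G}$ and $M$). So the paper's choice is not incidental: the approximation it builds is tied to the given projective hull $\eta$ in a way that an arbitrary $\cG(Q)$-approximation is not.

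One small caveat: your parenthetical ``horseshoe-type splicing with a $\cG(Q)$-approximation of $G$'' is vague and does not obviously produce an approximation of $M$ (a pullback along $X \rar G \leftarrow \wtil{G}_0$ gives a $\cB^{\perp}$-hull of $M$, not an approximation). Either drop that aside or replace it with the paper's actual construction, which is the quasilift of Huneke--Jorgensen--Wiegand.
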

	\begin{proof} By construction, $G\mbox{-}dim_R(G) = 0$ and $pd_R(X) = G\mbox{-}dim_R(M).$ Let $F_0 \onto X$ be a minimal surjection onto $X$ where $F_0$ is a free $R$-module. Let $\wtil{F_0}$ be a lifting of $F_0$ to $Q.$ Set $\wtil{G} = Ker(\varphi)$ where $\varphi: \wtil{F_0} \onto G$ is the natural map commuting through $X.$ Then the projective hull of $M$ fits in the following commutative diagram:
		\begin{equation} \label{dgm_m}
			\begin{gathered}
				\xymatrix@C-=4.2cm@R-=2.2cm{
					& \wtil{X} \ar@{=}[r]\ar[d] & \wtil{X} \ar[d] & & \\ 0 \ar[r] & \wtil{G} \ar[r]\ar[d] & \wtil{F_0} \ar[d] \ar[r]^{\varphi} & G \ar@{=}[d] \ar[r] & 0 \\ 0 \ar[r] & M \ar[r] & X \ar[r] & G \ar[r] & 0 }
			\end{gathered} 
		\end{equation} where $\wtil{X} = Ker(\wtil{F_0} \rar X).$ It is clear that $pd_Q(\wtil{X}) < \infty.$ The Auslander-Bridger formula \ref{section_gorenstein}\ref{item_AB} shows that $$pd_Q(\wtil{X}) =  depth_Q (Q) - depth_Q (\wtil{X}) = depth_R(R) - depth_R(M)  = G\mbox{-}dim_R(M),$$ where the second equality follows from the middle verticle sequence and the equality $depth_R(X) = depth_R(M);$ see \ref{section_gorenstein}\ref{item_proj_hull}. Since $G\mbox{-}dim_Q(M)$ is assumed to be finite, by \cite[18]{Masek}, the left vertical sequence implies that $G\mbox{-}dim_Q(\wtil{G})  < \infty.$ 
		The depth lemma shows that $$depth_Q (\wtil{G}) \geq \min(depth_Q (\wtil{F_0}), depth_Q (G) + 1) = depth_Q (Q).$$ Now, another application of Auslander-Bridger formula proves that $\wtil{G}$ is a totally reflexive $Q$-module thus completing the proof of the first part.
		
		Let $p \in Spec\ Q - \{n\}$ be a non-maximal prime. Localizing the leftmost vertical exact sequence \ref{dgm_m} at $p$, we get \begin{align*}
			0 \rar \wtil{X}_{p} \rar \wtil{G}_p \rar M_p \rar 0.
		\end{align*}
		 If $x \in p$ and if $\overline{p}$ is the image of $p$ in $R,$ then $M_p \cong M_{\overline{p}}$ (as a $Q_p$-module). Since $pd_{R_{\overline{p}}}(M_{\overline{p}}) < \infty,$ we get $pd_{Q_p} (M_{\overline{p}}) < \infty.$ If $x \notin p,$ then $M_p = 0,$ thus $\wtil{G}_p \cong \wtil{X}_p.$ Thus, in either case, $pd_{Q_p} (\wtil{G}_p) < \infty,$  which completes the proof of the last assertion.

	\end{proof}

	\section{Proof of Theorem \ref{theorem_main_cst}}  \label{sec_main} 
    In this section, we will prove Theorem \ref{theorem_main_cst} and place it in the context of a more complete result in  Theorem \ref{theorem_gen}. 
  	
	 The result below was proved in \cite[3.6]{CST} using Araya and Yoshino's depth formula \cite{ArYos}. We provide a different proof based on Peskine and Szpiro's acyclicity lemma. 
	\begin{lemma}\cite[3.6]{CST}   \label{prop_depth_acyclicity} Let $R$ be a local ring, and let $M$ and $N$ be finitely generated $R$-modules such that $CI\mbox{-}dim_R(M) < \infty.$ Set $g = CI\mbox{-}dim_R(M).$ Assume the following: 
		\begin{enumerate}[label=\normalfont(\roman*)]
			\item $depth_R(N) \geq  g.$ 
			\item for some positive integer $r \leq g$ and for all $i = r, \ldots, g,$ either $depth_R(Tor_i^R(M,N)) = 0$ or $Tor_i^R(M,N) = 0.$
		\end{enumerate} Then $Tor_i^R(M,N) = 0$ for all $i \gg 0$ if and only if $Tor_i^R(M,N) = 0$ for all $i \geq r.$  
	\end{lemma}
	\begin{proof} It is sufficient to prove the forward direction. So assume that $Tor_i^R(M,N) = 0$ for all $i \gg 0.$ Consider a projective hull of $M,$  \begin{align} \label{M_proj_hull} 
			0 \rar M \rar X \rar G \rar 0.
		\end{align} Here $CI\mbox{-}dim_R(G) = 0$ and $pd_R(X) = CI\mbox{-}dim_R(M) = g;$ see  \ref{section_gorenstein}\ref{item_proj_hull} and \ref{section_gorenstein_com}\ref{item_sather}. We tensor \eqref{M_proj_hull} with $N,$ to obtain $Tor_i^R(G, N) = 0$ for all $i \gg 0,$ which by \ref{section_gorenstein_com}\ref{item_tor_g_dim_0} gives $Tor_i^R(G,N) = 0$ for all $i \geq 1.$  This means that $depth_R(Tor_i^R(X,N)) \in  \{0, \infty\}$ for $i = r,\ldots, g.$ Set $Y = \Omega_R^{r-1}X.$ Then $pd_R(Y) = g -r+1$ and by assumption, for each $i = 1, \ldots, g-r+1,$ either $depth_R(Tor_i^R(Y,N)) = 0$ or $Tor_i^R(Y,N) = 0.$ Let $F_{\ast}  \rar Y \rar 0$ be a free resolution of $Y.$ Consider the complex $$\mathbb{F}(N): \ \ 0 \rar F_{g-r+1} \otimes N \rar F_{s-r} \otimes N \rar \cdots \rar F_0 \otimes N \rar 0.$$ For $i = 1,\ldots, g-r+1,$ we have 
		\begin{enumerate}[label=\normalfont(\alph*)]
			\item $depth_R(F_i \otimes N) = depth_R(N) \geq g \geq g-r+1 \geq i.$
			\item $depth_R(H_i(\mathbb{F}(N))) = depth_R(Tor_i^R(Y,N)) = 0$ or $ H_i(\mathbb{F}(N))  = 0.$
		\end{enumerate} By acyclicity lemma \cite[1.4.24]{BH} the complex $\mathbb{F}(N)$ is exact. Hence $Tor_i^R(Y,N) = 0$ for $i = 1,\ldots, g-r+1.$ Since $pd_R(Y) =g-r+1,$ we get $Tor_i^R(Y,N) = 0$ for all $i \geq 1.$ Equivalently, $Tor_i^R(M,N) \cong Tor_i^R(X,N) = 0$ for all $i \geq r.$ 
	\end{proof}

Following \cite[6.3]{AvBuch}, we say that a ring is $\mathcal{ET}$ if for all finitely generated $R$-mod $M,N,$ \begin{align} \label{eqn_ET} 
	Ext^i_R(M,N) = 0 \text{ for all $i \gg 0$} \implies Tor_i^R(M,N) = 0 \text{ for all $i \gg 0.$}
\end{align}  Recall that a \textit{codimension $c$ {deformation}} of a local ring $R$ is a surjective map $Q \rar R$ of local rings with kernel generated by a $Q$-regular sequence of length $c.$ We recall two facts:
\begin{enumerate}[label=\normalfont(\roman*)]
	\item The local ring $R$ is Gorenstein if and only if its deformation $Q$ is Gorenstein; see \cite[3.1.19]{BH}.
	\item The local ring $R$ is $\mathcal{TE}$ if and only if it is Gorenstein and $\mathcal{ET};$ see  \cite[4.3]{JS}.
\end{enumerate}

It has come to our notice that the $\mathcal{TE}$ case of the following result was also proved in \cite[2.5(3)]{KJYT}.

\begin{lemma}\label{lemma_deformation_TE_iff_TE}
	Let $R$ be a local ring and let $Q \rar R$ be a codimension $c$ deformation. 
	\begin{enumerate}[label=\normalfont(\alph*)]
		\item If $R$ is a $\mathcal{TE}$ ring, then $Q$ is also a $\mathcal{TE}$ ring.  
		\item 	If $R$ is a local Gorenstein $\mathcal{ET}$ ring, then $Q$ is also a local Gorenstein $\mathcal{ET}$ ring. 
	\end{enumerate}

\end{lemma} 
\begin{proof} We start with a proof of the second assertion.
	
Assume that $R$ is a local Gorenstein $\mathcal{ET}$ ring. 
Inductively, it is sufficient to prove for the case when $c = 1.$ Let $x$ be a regular element in $Q$ such that $R = Q/(x).$ Let $M$ and $N$ be $Q$-modules such that $Ext^i_Q(M,N) =0$ for all $i \gg 0.$ Without loss of generality, assume that $pd_Q(M) = pd_Q(N) = \infty.$ Set $M_1 = \Omega_R^rM$ and $N_1 = \Omega_R^rN$ for $r$ large enough such that $M_1$ and $N_1$ are totally reflexive \cite[2.1.26]{BH}. Then $x$ is regular on $M_1$ and $N_1.$ Moreover, $Ext^i_Q(M_1,N_1) = 0$ for all $i \gg 0.$ Applying $Hom_Q(-, N_1)$ to the short exact sequence $0 \rar M_1 \xrightarrow{x} M_1 \rar M_1/xM_1 \rar 0,$  gives $Ext^i_Q(M_1/xM_1, N_1)= 0$ for all $i \gg 0.$ 

By a result of Rees \cite[3.1.16]{BH}, $Ext^{i+1}_Q(M_1/xM_1,N_1) \cong Ext^{i}_R(M_1/xM_1, N_1/xN_1),$ thus $Ext^i_R(M_1/xM_1, N_1/xN_1) = 0$ for all $i \gg 0.$ Since $R$ is $\mathcal{ET},$ we get $$Tor_i^R(M_1/xM_1, N_1/xN_1) = 0 \text{ for } i \gg 0.$$ By the change of ring sequence \ref{seq_change}, $Tor_i^Q(M_1/xM_1, N_1/xN_1) = 0 \text{ for } i \gg 0.$ Tensoring the short exact sequence $0 \rar M_1 \xrightarrow{x} M_1 \rar M_1/xM_1 \rar 0$ with $N_1/xN_1,$ we conclude that $Tor_i^Q(M_1, N_1/xN_1) = 0$ for all $i \gg 0.$ A similar argument shows that $Tor_i^Q(M_1,N_1) = 0$ for all $i \gg 0$ which is same as $Tor_i^Q(M,N) = 0$ for all $i \gg 0.$ This shows that $Q$ is also an $\mathcal{ET}$ ring. 

For the first assertion, we note the following implications:
\begin{align*}
R \text{  is a local $\mathcal{TE}$ ring}  \iff & R \text{ is a local Gorenstein $\mathcal{TE}$ ring} \\ \implies & Q \text{ is a local Gorenstein $\mathcal{TE}$ ring} \\ \iff & Q \text{  is a local $\mathcal{TE}$ ring},
\end{align*} where the bi-implications are by \cite[4.3]{JS}, and the implication follows from the first assertion above. 
\end{proof}

\begin{lemma} \label{lemma_existence_ring} 
	Let $(R,m)$ be a local complete $\mathcal{T}\mathcal{E}$ ring with infinite residue field and let $M,N$ be $R$-modules such that $cx_R(M,N) \geq 1.$ Assume that that $vpd_R(M) <\infty$ and $M$ has locally finite projective dimension over the punctured spectrum of $R.$ 
	
	Then there is a local complete $\cT \cE$ ring $Q$ and a non-zero divisor $x\in Q$ such that 
	\begin{enumerate}[label=\normalfont(\roman*)]
		\item ${R} = Q/(x),$
		\item $vpd_{Q} (M) < \infty,$ and 
		\item \label{item_cx}  $cx_{Q}({M}, {N}) = cx_R(M,N) -1.$ 
	\end{enumerate} 
\end{lemma}  
\begin{proof} As $M$ is finitely generated, $M \cong \widehat{M}.$ There is a  deformation $\gamma: P \rar R$ such that $vpd_R(M) = pd_P({M}).$ It is clear that the residue field of $P$ is also infinite. The kernel of $\gamma$ is generated by a regular sequence $x_1, \ldots, x_r.$ The hypothesis on $M$ implies that $\lambda(Ext^i_R(M,N)) < \infty$ for all $i \gg 0.$ Set $c = cx_R(M,N) \leq r.$ As in the proof of \cite[2.4]{CD}, the map $P \rar R$ factors through a local ring $(Q,n)$ such that $R = Q/(x)$ where $x = x_c,$ $cx_{Q}(M,N) = cx_{{R}}({M},{N}) -1,$ and  $vpd_{Q}({M}) < \infty.$ By taking completion, if necessary, we can assume $Q$ is complete. By Lemma \ref{lemma_deformation_TE_iff_TE}, $Q$ is a local $\cT \cE$ ring.  
\end{proof}

\begin{lemma} \label{lemma_existence_ring2} 
	Let $(R,m)$ be a local complete intersection ring, and let $M,N$ be $R$-modules such that $cx_R(M,N) \geq 1.$ Assume that  $\lambda(Tor_i^R(M,N)) < \infty$ for $i \gg 0.$ Then there is a local complete intersection ring $Q$ and a non-zero divisor $x\in Q$ such that 
	\begin{enumerate}[label=\normalfont(\roman*)]
		\item $\widehat{R} = Q/(x),$ 
		\item \label{item_cx2}  $cx_{Q}(\widehat{M}, \widehat{N}) = cx_R(M,N) -1.$ 
	\end{enumerate} where $\widehat{R}$ is the $m$-adic completion of $R$ and for any $R$-module $X,$ the $m$-adic completion of $X$ is denoted $\widehat{X}.$
\end{lemma}  
\begin{proof} If the residue field of $R$ is not infinite, we replace $R$ by the faithfully flat extension $R[x]_{mR[x]}.$ Thus, we may assume without loss of generality that the residue field of $R$ is infinite.

	Recall that the property of being a local complete intersection ring is preserved under localization. Thus, by \ref{section_gorenstein_com}\ref{item_AvBuch}, the hypothesis on the length of $Tor$s gives  that $\lambda(Ext^i_R(M,N)) < \infty$ for all $i \gg 0.$ By definition, there is a quasi-deformation $R \rar \widehat{R}\leftarrow P$ where $\widehat{R}$ is the $m$-adic completion of $R$ and $P$ is a complete regular local ring. Applying completion gives $\lambda(Ext^i_{\widehat{R}}(\widehat{M}, \widehat{N})) < \infty$ for all $i \gg 0.$ Here $pd_P(\widehat{M}) < \infty,$ and $cx_{\widehat{R}}(\widehat{M}, \widehat{N}) = cx_R(M,N);$ see \cite[4.1.1]{AvBuch}. Now, we apply \cite[2.4]{CD} to the pair $(\widehat{M}, \widehat{N})$ to obtain a local complete intersection ring $Q = P/(x_1,\dots,\widehat{x_c}, \dots, x_{r})$ which satisfies the above properties. Rest of the proof is same as in Lemma \ref{lemma_existence_ring}.
\end{proof}

	The result that follows uses the approximation obtained in Proposition \ref{prop_projective_hull}. We will use this to inductively reduce complexity in the proof of Theorem \ref{theorem_main_cst}. 
	
	\begin{lemma}\label{lemma_change_of_ring_tor_vanish_geq_0} Let $R = Q/(x)$ where $Q$ is a local ring and $x$ is a non zero-divisor on $Q.$ Let $M$ and $N$ be finitely generated $R$-modules such that $CI\mbox{-}dim_Q(M) < \infty$ and $\lambda(Tor_i^R(M,N)) < \infty$ for all $i \geq 1.$  Assume further that for some positive integer $n$ the following holds:
		\begin{enumerate}[label=\normalfont(\alph*)]
			\item \label{k_a} $depth_R(N) \geq CI\mbox{-}dim_R(M) + n.$ 
			\item \label{k_b} $depth_R(M \otimes_R N) \geq n + 1.$ 
		\end{enumerate} 
	Then there exists a $\cG(Q)$-approximation $0 \rar \wtil{X} \rar \wtil{G} \rar M \rar 0$  of $M$ such that
		\begin{enumerate}[label=\normalfont(\roman*)]
			\item \label{l_0} $CI\mbox{-}dim_Q(\wtil{G}) = 0,$ and $pd_Q(\wtil{X}) = CI\mbox{-}dim_R(M).$   
			\item \label{l_1} $\lambda(Tor_i^Q(\wtil{G},N)) < \infty$ for all $i \geq 1.$ 
			\item \label{l_2} $Tor_i^Q(\wtil{G},N) \cong Tor_i^Q(M,N)$ for all $i \geq 2$ and $Tor_1^Q(\wtil{G},N) \cong Tor_1^R(M,N).$   
			\item \label{l_4} $depth_Q (\wtil{G} \otimes_Q N)  \geq n.$
			
		\end{enumerate} Furthermore, if $M$ is locally free on the punctured spectrum of $R,$ then $\wtil{G}$ is locally free on the punctured spectrum of $Q.$ 
	\end{lemma}
	\begin{proof} We first note that $G\mbox{-}dim_R(M) < \infty$ as $CI\mbox{-}dim_R(M) < \infty;$ see \cite[1.4]{AGP}. consider a projective hull  of $M$ on $R,$ \ref{section_gorenstein}\ref{item_proj_hull}  
		\begin{align} \label{proj_hull} 
			0 \rar M \rar X \rar G \rar 0 
		\end{align} 
	
	Tensoring \ref{proj_hull} with $N$, it follows from the hypothesis on $Tor$s that $\lambda(Tor_i^R(G,N)) < \infty \text{ for } i \gg 0.$ Thus, by \ref{section_gorenstein_com} \ref{item_ci_dim_0} \begin{align} \label{l_a}
		\lambda(Tor_i^R(G,N)) < \infty  \text{ for } i \geq 1.
	\end{align} 

	The  hypothesis on $Tor_i^R(M,N)$ and \ref{l_a} imply that $\lambda(Tor_i^R(X,N)) < \infty $ for all $i \geq 1.$ Since $pd_R(X) = CI\mbox{-}dim_R(M) \leq depth_R(N),$ applying Lemma \ref{prop_depth_acyclicity} to the pair $(X,N),$ we get \begin{align}  \label{j_c}
	Tor_i^R(X,N) = 0 \text{ for } i \geq 1.
	\end{align} 

Using \ref{j_c}, and applying the depth formula on $(X,N)$, the hypothesis \ref{k_a} implies \begin{align} \label{j_d} 
	depth_R(X \otimes_R N)  \geq n. 
\end{align}By \ref{k_b} and \ref{j_c}, we conclude that from the exact sequence $Tor_1^R(X,N) \rar Tor_1^R(G,N) \rar M \otimes_R N,$ that
 \begin{align}  \label{j_x}
	Tor_1^R(G,N) = 0.
\end{align} This gives a short exact sequence $0 \rar M \otimes_R N \rar X \otimes_R N \rar G \otimes_R N \rar 0.$  Now the depth lemma along with \ref{j_d}, and \ref{k_b} gives
\begin{align} \label{j_y}
	depth_R(G \otimes_R N) \geq n. 
\end{align}
	Let $\wtil{G}$ and $\wtil{X}$ be the $Q$-modules associated to the projective hull \eqref{proj_hull} as in Proposition \ref{prop_projective_hull} such that \begin{align} \label{G_approx} 
		0 \rar \wtil{X} \rar \wtil{G} \rar M \rar 0 
	\end{align} is a $\cG(Q)$-approximation of $M$ as a $Q$-module.  Since $CI\mbox{-}dim_Q(M) < \infty,$ the claim in \ref{l_0} follows from Proposition \ref{prop_projective_hull}, \cite[8.7(9)]{Av} and  \ref{section_gorenstein_com}\ref{item_sather}.
	

	Using the change of rings sequence \eqref{seq_change} for the pair $(G,N)$ and \eqref{l_a}, it follows that $\lambda(Tor_i^Q(G,N)) < \infty$ for all $i \geq 2.$ On tensoring the middle horizontal sequence in diagram \eqref{dgm_m}  with $N,$ we conclude that $\lambda(Tor_i^Q(\wtil{G},N)) < \infty \text{ for } i \geq 1,$ which proves \ref{l_1}.

It follows from \eqref{j_c}  and the change of rings sequence \eqref{seq_change}, that $Tor_i^Q(X,N) = 0$ for all $i \geq 2.$ On tensoring the middle vertical sequence in diagram \ref{dgm_m} with $N,$ 
 \begin{align}
		\label{l_6}  Tor_i^Q(\wtil{X},N) = 0 \text{ for } i \geq 1.
	\end{align}

The left vertical sequence in diagram \eqref{dgm_m} and the Tor-independence in \eqref{l_6}, show that the map $Tor_i^Q(\wtil{G},N) \rar Tor_i^Q(M,N)$ is an isomorphism for all $i \geq 2,$ which shows the first half of \ref{l_2}.  Now consider the following part of the change of rings sequence \ref{seq_change} for the pair $(G,N),$ $$Tor_1^R(G,N) \rar Tor_2^Q(G,N) \rar Tor_2^R(G,N) \rar G \otimes_R N \rar Tor_1^Q(G,N) \rar Tor_1^R(G,N).$$ By \ref{j_x}, the first and the last terms vanish. By assumption $n \geq 1,$ therefore the map $Tor_2^R(G,N) \rar G \otimes_R N$ is zero by \ref{l_a}, and  \ref{j_y}. Thus, there are isomorphisms \begin{align} \label{j_z}
	Tor_2^Q(G,N) \cong Tor_2^R(G,N) \text{ and } Tor_1^Q(G,N) \cong G \otimes_R N. 
\end{align}Now, the following isomorphisms follow from diagram \ref{dgm_m} and the vanishing in \ref{j_c} $$Tor_1^Q(\wtil{G},N) \cong Tor_2^Q(G,N) \cong Tor_2^R(G,N) \cong Tor_1^R(M,N),$$ which completes the proof of \ref{l_2}. 

Tensoring the sequence $0 \rar \wtil{G} \rar \wtil{F_0} \rar G \rar 0$ in the diagram \ref{dgm_m} with $N,$ we get $0 \rar Tor_1^Q(G,N) \rar \wtil{G} \otimes_Q N \rar \wtil{F_0} \otimes_Q N \rar G \otimes_Q N \rar 0.$ Now using the depth bound on $G \otimes_R N$ in \ref{j_y}, and the isomorphism \ref{j_z}, we conclude that $depth_Q (\wtil{G} \otimes_Q N) \geq n,$ which proves \ref{l_4}. 

		The last assertion follows from Proposition \ref{prop_projective_hull}.
	\end{proof}

	\begin{proof}[Proof of Theorem \ref{theorem_main_cst}] In view of the depth formula for modules with finite $CI\mbox{-}$dimension (\textit{cf}. section \ref{section_depth_formula}), it is sufficient to show that the following inequality 
		\begin{align} \label{item_x} depth_R(M) + depth_R(N) \geq depth_R(R)  + cx_R(M,N).
		\end{align} implies $Tor_i^R(M,N)=0$ for all $i \geq 1.$ 
		
		Assume first that $cx_R(M,N)=0.$ This means that $Ext_R^i(M,N)=0$ for all $i \gg 0.$ If $R$ is $\mathcal{T}\mathcal{E},$ then by \cite[4.3]{JS} and \eqref{eqn_ET}, or if $R$ is a complete intersection ring, then by \ref{section_gorenstein_com}\ref{item_AvBuch}, we get $Tor_i^R(M,N)=0$ for all $i \gg 0.$  Thus, we are done by Lemma \ref{prop_depth_acyclicity}, which we can apply as $CI\mbox{-}dim_R(M) < \infty;$ see  \ref{section_gorenstein_com}\ref{item_vpd_finite}.
		
		Assume now that $cx_R(M,N) \geq 1.$ We first assume that hypothesis \ref{item_TE} of the Theorem holds, i.e. $R$ is a local complete   $\mathcal{TE}$ ring, $vpd_R(M) < \infty$ and $M$ is locally free on the punctured spectrum of $R.$ By Lemma \ref{lemma_existence_ring} there a local complete  $\mathcal{TE}$ ring $Q$ and a regular element $x\in Q$ such that ${R} =Q/(x)$, $vpd_{Q}({M}) < \infty,$  and $cx_Q({M},{N})=cx_R(M,N)-1.$ 
		
		
		By Lemma \ref{lemma_change_of_ring_tor_vanish_geq_0}, with $n=cx_R(M,N),$  there is a $\cG(Q)$-approximation  $0 \rar \wtil{X} \rar \wtil{G} \rar M \rar 0$ of $M$ on $Q$ such that $CI\mbox{-}dim_Q(\wtil{G}) = 0$ and 
		\begin{enumerate} [label=\normalfont(\Roman*)]
			\item \label{k_1} $\lambda(Tor_i^Q(\wtil{G},N)) < \infty$ for all $i \geq 1.$ 
			\item \label{k_2} $Tor_i^Q(\wtil{G},N) \cong Tor_i^Q(M,N)$ for all $i \geq 2$ and $Tor_1^Q(\wtil{G},N) \cong Tor_1^R(M,N).$   
			\item \label{k_3} $depth_Q (\wtil{G} \otimes_Q N) \geq cx_R(M,N) = cx_Q(\wtil{G},N) + 1.$ 
			\item \label{k_4} $\wtil{G}$ is locally free on the punctured spectrum of $Q.$
		\end{enumerate}
		
		where the equality in \ref{k_3} follows from Lemma \ref{lemma_existence_ring}\ref{item_cx} and the fact that $pd_Q(\wtil{X}) <\infty.$ 
		
		
		Furthermore, it follows from \ref{item_x} that $depth_Q (\wtil{G})+ depth_Q (N) \geq depth_Q (Q)  + cx_Q(\wtil{G},N).$ Thus, the pair $(\wtil{G},N)$ satisfy the inductive hypothesis over $Q$ with $cx_Q(\wtil{G},N) = cx_R(M,N) - 1.$ Therefore, by induction on the complexity, we get \begin{align} \label{vanish} 
			Tor_i^Q(\wtil{G},N) = 0 \text{ for } i \geq 1. 
		\end{align}
		
		It follows from the Tor vanishing in \eqref{vanish}, the first isomorphism in \ref{k_2} that $Tor_i^Q(M,N) = 0$ for all $i \geq 2.$ Now, the change of rings sequence \eqref{seq_change} shows that  \begin{align} \label{periodic}
			Tor_i^R(M,N) \cong Tor_{i+2}^R(M,N) \text{ for } i \geq 1.
		\end{align}  The second isomorphism in \ref{k_2} and \eqref{vanish} gives $Tor_1^R(M,N)=0.$ Now, consider the following part of the exact sequence \eqref{seq_change} $$Tor_2^Q(M,N) \rar Tor_2^R(M,N) \xrightarrow{\Psi}  Tor_0^R(M,N).$$ Here the first term vanishes by \eqref{vanish} and the isomorphism in \ref{k_2}. By hypothesis assumed on $depth_R(M \otimes_R N)$ and $M$ being locally free on the punctured spectrum of $R,$  the map $\Psi$ is $0.$  Thus we conclude that $Tor_2^R(M,N)=0.$ By periodicity of Tor \eqref{periodic}, we conclude that $Tor_i^R(M,N) = 0$ for all $i \geq 1,$ which completes the proof for the case when $R$ is a $\mathcal{TE}$ ring. 
		
		When $R$ is a local complete intersection ring, the only change in the proof is to use Lemma \ref{lemma_existence_ring2} and note that for such rings, the hypothesis $\lambda(Tor_i^R(M,N)) < \infty$ for $i \geq 1$ implies $\lambda(Ext^i_R(M,N)) < \infty$ for $i \geq 1$ by \ref{section_gorenstein_com}\ref{item_AvBuch}. 
	\end{proof}

\begin{remark} If the local ring $R$ satisfies some property $\cP,$ a natural question to ask is whether it is possible to compute  $CI\mbox{-}dim_R(M)$ using a quasi-deformation $R \rar R' \rightarrow Q$ such that $R'$ also satisfies the property $\cP.$ This is true when $\cP$ is the property that $R$ is 
	\begin{enumerate}
		\item a Cohen-Macaulay ring; see \cite[4.1]{CD} and \cite[23.3]{Mat}.
		\item a Gorenstein ring; see \cite[3.1]{Sather} and \cite[23.4]{Mat}.
	\end{enumerate}
	If a similar result holds for the $\mathcal{TE}$ property, one can replace $vpd_R(M)$ with $CI\mbox{-}dim_R(M)$ in part \ref{item_TE} of Theorem \ref{theorem_main_cst}. 
\end{remark}

\begin{remark}
It is not known to us that if the property of being a $\mathcal{TE}$ ring is preserved under localization. If this were true, then in Theorem \ref{theorem_main_cst} one can replace the hypothesis on $M$ being locally free over the punctured spectrum of $R$ with  the hypothesis ``$\lambda(Tor_i^R(M,N)) < \infty$ for all $i \geq 1.$" 
\end{remark}

We show by an example that the hypothesis assumed in Theorem \ref{theorem_main_cst} on depth of tensor product is necessary, and merely having a finiteness condition on length of Tor modules in Theorem \ref{theorem_main_cst} is not enough, even if $M$ and $N$ have sufficiently high depth.
\begin{exmp} \label{eg_3}
	Let $R$ be any (non-regular) Cohen-Macaulay local ring of dimension $d \geq 1.$ Let $M$ be the $d$'th syzygy  $ \Omega^d_Rk$ of the residue field $k.$ Then $M$ is a (nonzero) maximal Cohen-Macaulay $R$-module which is locally free over the punctured spectrum of $R.$ Let $N$ be any (non-free) maximal Cohen-Macaulay  $R$-module.   Observe that $$0 \rar Tor_1^R(\Omega^{d-1}_Rk, N) \rar  M\otimes_R N$$ is a nonzero map and the Tor module has finite length, thus $depth_R(M \otimes_R N) = 0.$ Note that $depth_R(M) = depth_R(N) = depth_R(R)$ and $\lambda(Tor_i^R(M,N)) < \infty$ for all $i \geq 1$ but $Tor_i^R(M,N) \neq 0$ for any $i \geq 1.$ 
\end{exmp}

For any finitely generated module $M$ over a local ring $(R,m,k),$ let $\mu_R(M)$ be the minimal number of generators of $M.$

\begin{exmp}
	Let $k$ be a field and let $R = k[x,y,z]/(x^2-y^2, x^2 - z^2, xy, xz, yz).$ It follows from \cite[3.2.11(b)]{BH} that $R$ is a local Artinian Gorenstein ring. Further, one can check that $m^3 = 0 \neq m^2,$ $\mu_{R}(m) = 3,$ and $\lambda_{R}(R) = 5.$ Thus, $R$ is a local $\mathcal{TE}$ ring which is not a complete intersection; see \cite[3.6]{HJ}. For a family of examples in higher dimension, we refer to \cite[4.2]{KJYT}. 
	The authors are not aware of any example of a $\mathcal{TE}$ ring that is not a complete intersection ring and supports modules with finite virtual projective dimension but infinite projective dimension.
\end{exmp}

As in \cite{CST}, we point out that Theorem \ref{theorem_main_cst} fits in the broader context of the following result.
\begin{theorem}\label{theorem_gen} 
	Let $R$ be a local ring, and let M and N be nonzero finitely generated $R$-modules  Assume that one of the following holds:
	\begin{enumerate}[label=\normalfont(\alph*)]
		\item \label{item_TE_1} $R$ is a local complete  $\mathcal{TE}$ ring, with infinite residue field. Assume that $vpd_R(M) < \infty$ and $M$ is locally free on the punctured spectrum of $R.$
		\item \label{item_CI_1} $R$ is a local complete intersection ring and $\lambda(Tor_i^R(M,N)) < \infty$ for all $i \geq 1.$ 
	\end{enumerate}
	Assume further that $depth_R( M \otimes_R N) \geq cx_R(M,N) + 1.$ Then the following conditions are equivalent:
	\begin{enumerate}[label=\normalfont(\roman*)]
		\item \label{1} $Tor_i^R(M,N) = 0$ for all $i \geq 1.$
		\item \label{2} $depth_R(M) + depth_R(N) \geq depth_R(R) + cx_R(M,N).$
		\item \label{3} $depth_R(M) + depth_R(N) = depth_R(R) + depth_R(M \otimes_R N).$ 
		\item \label{4} $depth_R(M) + depth_R(N) \geq depth_R(R) + depth_R(M \otimes_R N).$ 
		\item \label{5} $depth_R(M) + depth_R(N) \geq depth_R(R)$ and $Tor_i^R(M,N) = 0$ for all $i\gg 0.$
		\item \label{6} $depth_R(M) + depth_R(N) \geq depth_R(R)$ and $Ext^i_R(M,N) = 0$ for all $i\gg 0.$
	\end{enumerate}
\end{theorem}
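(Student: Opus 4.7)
The plan is to prove all six equivalences by chaining implications that draw almost entirely on Theorem~\ref{theorem_main_cst}, supplemented by the ${\bf te}$/${\bf et}$ interchange available in both hypothesis cases. My core observation is that Theorem~\ref{theorem_main_cst} simultaneously yields, under the standing hypotheses, both the inequality $depth (M \otimes_R N) \geq depth (M) + depth (N) - depth (R)$ and the characterization that equality holds precisely when $(M,N)$ is Tor-independent. Every implication below will reduce to this pair of facts together with the Ext/Tor exchange.

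I would first handle the equivalence of \ref{1}, \ref{3}, and \ref{4}. The implication \ref{1} $\Rightarrow$ \ref{3} is just the depth formula for Tor-independent modules with one side of finite $CI$-dimension (see \ref{section_depth_formula}); \ref{3} $\Rightarrow$ \ref{4} is immediate; and \ref{4} combined with the inequality of Theorem~\ref{theorem_main_cst} forces equality, hence \ref{3}, which returns to \ref{1} via the equality case. For \ref{1} $\Leftrightarrow$ \ref{2}: from \ref{1} the depth formula together with the hypothesis $depth (M \otimes_R N) \geq cx_R(M,N)+1$ yields \ref{2} directly, while \ref{2} is precisely the inequality used in the proof of Theorem~\ref{theorem_main_cst} to derive Tor-independence via induction on $cx_R(M,N)$. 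Thus \ref{1}, \ref{2}, \ref{3}, and \ref{4} are mutually equivalent.

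For the remaining two conditions I would use the ${\bf te}$/${\bf et}$ interchange: in case \ref{item_CI_1}, Tor and Ext vanish eventually together by \ref{section_gorenstein_com}\ref{item_AvBuch}; in case \ref{item_TE_1}, by \cite[4.3]{JS} a $\mathcal{TE}$ ring is Gorenstein and satisfies both ${\bf te}$ and ${\bf et}$, so the same biconditional holds. This gives \ref{5} $\Leftrightarrow$ \ref{6} at once. The implication \ref{1} $\Rightarrow$ \ref{5} is immediate from the depth formula together with $depth (M \otimes_R N) \geq cx_R(M,N)+1 \geq 1$. Conversely, given \ref{5}, the interchange yields $Ext^i_R(M,N) = 0$ for $i \gg 0$, whence $cx_R(M,N) = 0$, so the depth inequality in \ref{5} becomes $depth (M) + depth (N) \geq depth (R) = depth (R) + cx_R(M,N)$, which is \ref{2}, and we have already shown \ref{2} $\Rightarrow$ \ref{1}. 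The only step carrying nontrivial content is \ref{2} $\Rightarrow$ \ref{1}, which simply invokes the induction already developed for Theorem~\ref{theorem_main_cst}; no essentially new obstacle arises.
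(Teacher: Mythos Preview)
Your proposal is correct and follows essentially the same approach as the paper: both reduce everything to the implication \ref{2} $\Rightarrow$ \ref{1}, which is exactly the inequality \eqref{item_x} established inside the proof of Theorem~\ref{theorem_main_cst}, and both handle \ref{5} $\Leftrightarrow$ \ref{6} via the {\bf te}/{\bf et} interchange (citing \cite[4.3]{JS} for $\mathcal{TE}$ rings and \cite[6.1]{AvBuch} for $\mathcal{CI}$ rings). The only cosmetic difference is that you route \ref{4} back to \ref{1} via \ref{3} using the inequality and equality cases of Theorem~\ref{theorem_main_cst}, whereas the paper observes directly that \ref{4} together with the standing hypothesis $depth(M\otimes_R N)\geq cx_R(M,N)+1$ yields \ref{2}; both are trivially valid.
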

The part \ref{item_CI_1}  of Theorem \ref{theorem_gen} was proved in \cite[3.1]{CST}. Essentially the same proof applies here and we restate the main steps to keep the article self-contained. 
\begin{proof}[Proof of Theorem \ref{theorem_gen}] The implications \ref{1} $\implies$ \ref{3} and \ref{1} $\implies$ \ref{5} hold more generally by the depth formula \cite{ArYos} or \cite{I}, without any hypothesis on depth of $M \otimes_RN.$ The implications \ref{3} $\implies$ \ref{4} $\implies$ \ref{2} are immediate.  The equivalence of \ref{5} and \ref{6} follows from the definition of a $\mathcal{TE}$ ring and was proved for the complete intersection rings by Avramov and Buchweitz \cite[6.1]{AvBuch}. It holds independently of the depth inequality. It is clear that \ref{6} $\implies$ \ref{2} as the vanishing assumption in \ref{6} implies $cx_R(M,N) = 0.$ 
	
Thus it is sufficient to prove \ref{2} $\implies$ \ref{1}. This follows from the proof of Theorem \ref{theorem_main_cst}; ref inequality \eqref{item_x}.
\end{proof}

	\section{On a question of Celikbas, Sadeghi and Takahashi} 
	In their work \cite{CST}, the authors posed the question of whether the conclusion of Theorem \ref{theorem_gen} holds if one replaces hypothesis $\lambda(Tor_i^R(M,N)) < \infty$ for all $i \geq 1$ with $\lambda(Tor_i^R(M,N)) < \infty$ for all $i \gg 0;$ see \cite[3.10]{CST}. 
	
	In this section, we will first provide a negative answer to this question. We will then establish a weaker bound on depth of tensor product with this relaxed hypothesis on length of Tors in Theorem \ref{theorem_syz}. 
	
	We will construct a pair $(X,C)$ of $R$-modules in Example \ref{eg_4}, such that $\lambda(Tor_i^R(X,C)) < \infty$ for all $i \gg 0$ and $depth_R(X \otimes_R C) \geq cx_R(X,C) + 1.$ However, $X$ and $C$ do not satisfy the implication \ref{2} $\implies$ \ref{1} of Theorem \ref{theorem_gen}, thus answering the above question negatively. 

\begin{exmp} \label{eg_4}
	
	Let $R$ be a two-dimensional local complete intersection ring. Let $X$ be a Cohen-Macaulay $R$-module such that $pd_R(X) = 1.$ For example, we can take $X = R/(x)$ for some regular element $x.$ Observe that $$dim(R/Ann_R(X)) = dim(Supp(X)) = depth_R(X) = 1,$$  therefore $Ann_R(X) \neq 0.$ Set $C = R/Ann_R(X)$ and $N = Ann_R(X).$  Since $Ass(C)\subseteq Ass(X)$ and $depth_R(X) \neq 0,$ we have 
	\begin{align} \label{ineq_d}
		depth_R(C) \geq 1. 
	\end{align} In particular, $depth_R(N) = 2.$ Note also that \begin{align} \label{tor_neq_0} 
		Tor_1^R(X,C) \cong X \otimes_R N \neq 0.
	\end{align}

%

	Since $CI\mbox{-}dim_R(N) = 0,$ and $pd_R(X) = 1,$ $Tor_i^R(X,N) = 0$ for all $i \geq 1.$ The depth formula (section \ref{section_depth_formula}) gives, $$depth_R(X\otimes_R N) = depth_R(X) + depth_R(N) - depth_R(R) = 1.$$ Thus by \eqref{tor_neq_0}, $depth_R(Tor_1^R(X,C)) = 1 \neq 0.$ Observe also that $cx_R(X,C) =0$ as $pd_R(X) < \infty.$ Therefore, the pair $(X,C)$ satisfy the following
	\begin{enumerate}[label=(\alph*)]
		\item $Tor_i^R(X,C) = 0$ for all $i \geq 2.$  
		\item $depth_R(X \otimes_R C) = cx_R(X,C) + 1.$ 
		\item $depth_R(X) + depth_R(C) \geq depth_R(R) + cx_R(X,C).$ 
	\end{enumerate} However, $Tor_1^R(X,C) \neq 0.$ Thus, the pair $(X,C)$ is a counterexample to the question asked in  \cite[3.10]{CST}. 
	
\end{exmp}

    Recall that, $syz_R(M)$ is the largest integer $t \leq depth_R(R)$ such that $M$ is a $t$'th syzygy. We recollect some useful properties of this concept.
    \begin{lemma}\label{lemma_depth_syz} Let $R$ be a Gorenstein local ring. Assume that $M$ is a finitely generated $R$-module such that $G\mbox{-}dim_R(M) < \infty.$ Then
    	\begin{enumerate}[label=\normalfont(\alph*)]
    		\item \label{item_depth_a} $syz_R(M) = depth_R(R)$ if and only if $M$ is totally reflexive. 
    		\item \label{item_depth_b} $depth_R(M) \geq syz_R(M).$
    		\item \label{item_depth_c} If $0 \rar M \rar X \rar G \rar 0$ is a projective hull of $M$ (cf. (\ref{section_gorenstein}\ref{item_proj_hull})), then $syz_R(X) \geq syz_R(M).$ 
    	\end{enumerate}
    \end{lemma}
      \begin{proof} Set $d = depth_R(R)$ and $s = syz_R(M).$ If $s= d,$ there is a resolution $0 \rar M \rar F_{d-1} \rar \cdots \rar F_0 \rar M' \rar 0.$ A quick check shows that $d \geq depth_R(M) \geq min(depth_R(M') + d, depth_R(R)) \geq d.$ Converse is immediate. This proves \ref{item_depth_a} 
    	
    	By \ref{section_gorenstein}\ref{item_AB}, $depth_R(M) \leq depth_R(R).$ If equality holds, then $M$ is totally reflexive and hence $s = d = depth_R(M),$ by \ref{item_depth_a}. So we may assume strict inequality. By depth lemma, $depth_R(\Omega_RM) = depth_R(M) + 1.$ By decreasing induction hypothesis on $depth$, we can assume $depth_R(\Omega_RM) \geq syz_R(\Omega_RM).$ Combining the two, it is sufficient to show that $syz_R(\Omega_RM) \geq s + 1,$ which is immediate from definition and \ref{item_depth_a}.
    	
    	We now prove \ref{item_depth_c}. If $M$ is totally reflexive, then $X$ is free; hence $s = syz_R(X) = d,$ and we are done. So assume that $$d > depth_R(M) = depth_R(X) \geq \max(s,syz_R(X)).$$ Since $M$ is an $s$'th syzygy, it is $s$-torsionless; see \cite[42]{Masek}. Now observe that by \cite[8(a)]{Masek},  $X$ is also $s$-torsionless, hence, again by \cite[42]{Masek}, $X$ is an $s$'th syzygy. This shows that $syz_R(X) \geq syz_R(M).$ 
    \end{proof}

	The proof of Theorem \ref{theorem_syz} requires a technical Lemma \ref{lemma_3_7_new}, which is similar to \cite[3.7]{CST}. A word about the trade off between the two lemmas - the hypothesis \ref{item_1} of Lemma  \ref{lemma_3_7_new} is as desired in  \cite[3.10]{CST}, the inequality in Lemma \ref{lemma_depth_syz} show that this comes at the expense of a stronger assumption in \ref{item_2}. 
	\begin{lemma} \label{lemma_3_7_new} Let $R$ be a Gorenstein local ring, and let $M$ and $N$ be nonzero finitely generated $R$-modules such that $CI\mbox{-}dim_R(M)< \infty.$ Let $c$ be any non-negative integer for which the following conditions hold:
		\begin{enumerate}[label=\normalfont(\alph*)] 
			\item \label{item_1}  $\lambda(Tor_i^R(M,N)) < \infty$ for all $i \gg 0.$
			\item \label{item_2} $syz_R(M) + syz_R(N) \geq depth_R(R) + c.$
			\item \label{item_3} $depth_R(M \otimes_R N) \geq c+1.$
		\end{enumerate}
		Then there exists a finitely generated $R$-module $L$ satisfying the following
		\begin{enumerate}[label=\normalfont(\roman*)]
			\item \label{item_a_0} $CI\mbox{-}dim_R(L) = 0.$ 
			\item \label{item_a_1} $Tor_i^R(L,N) = 0$ for $i = 1,\ldots, c+1.$
			\item \label{item_a_2} $Tor_{i+c+1}^R(L,N) \cong Tor_i^R(M,N)$ for all $i \geq 1.$
			\item \label{item_a_3} $cx(M,N) = cx_R(L,N).$ 
		\end{enumerate}
	\end{lemma}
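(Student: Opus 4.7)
My plan is to construct $L$ by realizing $M$ as a $(c+1)$-st syzygy of $L$ in a free resolution, and to make $L$ maximal Cohen-Macaulay so that \ref{item_a_0} follows from the Auslander-Bridger-like formula for $CI$-dimension, while \ref{item_a_2} and \ref{item_a_3} follow mechanically from the resolution and the shift-invariance of complexity recorded in section \ref{sec_cmpx}. The main content then concentrates in \ref{item_a_1} and in ensuring $L$ has the right depth.

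Since $R$ is Gorenstein and $M$ satisfies $\wtil{S_m}$, Theorem \ref{theorem_serre_k_syzygy} gives an exact sequence
$$0 \to M \to F_{m-1} \to \cdots \to F_0 \to C_0 \to 0$$
with each $F_i$ free. When $m \geq c+1$, I would truncate and set $L := \Omega^{m-c-1}(C_0)$; the truncated sequence $0 \to M \to F_{m-1} \to \cdots \to F_{m-c-1} \to L \to 0$ then witnesses $M = \Omega^{c+1}(L)$. When $m < c+1$, I would first replace $N$ by the syzygy $\Omega^{c+1-m}(N)$, noting that $\wtil{S_n}$ on $N$ promotes to $\wtil{S}_{n+c+1-m}$ on the syzygy and that $\text{Tor}_i^R(M,\Omega^{c+1-m}(N)) \cong \text{Tor}_{i+c+1-m}^R(M,N)$ for $i \geq 1$; this reduces to $m \geq c+1$ at the cost of updating $c$ accordingly. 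Properties \ref{item_a_2} and \ref{item_a_3} then follow directly, and the finiteness of $CI\mbox{-}\dim L$ in \ref{item_a_0} follows from \ref{section_gorenstein_com}\ref{item_sather} since the truncated sequence connects $L$ to $M$ via free modules. To upgrade this to $CI\mbox{-}\dim L = 0$, the Auslander-Bridger-like formula reduces the task to $\text{depth}(L) = \text{depth}(R)$, which I would establish by depth-chasing through the syzygy sequences $0 \to \Omega^{i+1}(L) \to F_i \to \Omega^i(L) \to 0$, leveraging $\text{depth}(M) \geq m$ and $m+n \geq \text{depth}(R) + c$ in conjunction with the shift of Step 1.

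For \ref{item_a_1}, I would tensor the truncated sequence with $N$ to form
$$\mathbb{E}: 0 \to M \otimes_R N \to F_c \otimes_R N \to \cdots \to F_0 \otimes_R N \to 0,$$
whose homology in degree $i$ is $\text{Tor}_i^R(L,N)$ for $1 \leq i \leq c+1$. Hypothesis \ref{item_3} gives $\text{depth}(M \otimes_R N) \geq c+1$ at the leftmost spot, while $\wtil{S_n}$ on $N$ (after the shifting, if any) supplies $\text{depth}(F_j \otimes_R N) = \text{depth}(N) \geq j$ for $1 \leq j \leq c$, which is the depth hypothesis required by Peskine-Szpiro's acyclicity lemma \cite[1.4.24]{BH}. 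The lemma then yields the Tor vanishing provided each nonzero intermediate homology has depth zero; I would arrange this using hypothesis \ref{item_1} (length finiteness of $\text{Tor}_i^R(M,N)$ for $i \gg 0$, hence depth zero there) together with the Tor-rigidity criterion \cite[3.4]{CST}, propagating the depth-zero property from $i \gg 0$ down to $1 \leq i \leq c+1$ via the isomorphism in \ref{item_a_2} and a standard localization argument.

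The hard step will be \ref{item_a_1}: aligning the Serre conditions, the depth bound on $M \otimes_R N$, and the length-finiteness hypothesis so that the acyclicity lemma applies with the correct depth-zero input on all the intermediate Tor homologies. The shifting bookkeeping in Step 1 is also delicate, because replacing $N$ by a syzygy alters hypotheses \ref{item_1}, \ref{item_2}, and \ref{item_3} simultaneously, and one must verify the pair $(M,\Omega^{c+1-m}(N))$ still satisfies them with suitably updated parameters. This is precisely where the Serre conditions in \ref{item_2} substitute for the stronger all-degree length-finiteness hypothesis used in the analogous Lemma \cite[3.7]{CST}.
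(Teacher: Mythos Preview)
Your construction takes $L$ so that $M=\Omega^{c+1}_R(L)$, but this does not produce a module with $CI\mbox{-}\dim_R L=0$. The depth lemma applied to the short exact sequences $0\to\Omega^{i+1}(L)\to F_i\to\Omega^i(L)\to 0$ only gives $depth(\Omega^i(L))\ge\min\{depth(R),\,depth(\Omega^{i+1}(L))-1\}$, so after $c+1$ steps you obtain at best $depth(L)\ge\min\{depth(R),\,depth(M)-c-1\}$. Since $\wtil{S_m}$ gives $depth(M)\ge\min\{m,depth(R)\}$, this yields $depth(L)\ge m-c-1$, not $depth(L)=depth(R)$; the hypothesis $m+n\ge depth(R)+c$ does not close this gap. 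Your shifting manoeuvre on $N$ does not help here either, as it leaves $M$ and the construction of $L$ unchanged. Without $CI\mbox{-}\dim_R L=0$ you also lose access to \ref{section_gorenstein_com}\ref{item_ci_dim_0}, and with it the finite-length information on $Tor_i^R(L,N)$ for small $i$ that the acyclicity lemma would need.

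The acyclicity step has a second, related gap. The intermediate homologies of your complex $\mathbb{E}$ in degrees $1\le i\le c+1$ are exactly the modules $Tor_i^R(L,N)$ you are trying to annihilate; hypothesis \ref{item_1} and your isomorphism \ref{item_a_2} give length finiteness only for $Tor_j^R(L,N)$ with $j\ge c+2$, which is the wrong range. Invoking \cite[3.4]{CST} to ``propagate'' depth zero downward is circular: that rigidity statement takes Tor vanishing in an initial segment as input, which is precisely what you are trying to prove. The paper circumvents both issues by a different mechanism: it takes a projective hull $0\to M\to X\to G\to 0$, so that $CI\mbox{-}\dim_R G=0$ holds by construction, and uses $pd_R X<\infty$ together with the Serre conditions in \ref{item_2} to force $Tor_i^R(X,N)=0$ for all $i\ge 1$ (writing $X$ and $N$ as $m$th and $n$th syzygies pushes all the Tor into degrees $>\dim R$). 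This simultaneously delivers the Tor shift $Tor_{i+1}^R(G,N)\cong Tor_i^R(M,N)$ and, via \ref{section_gorenstein_com}\ref{item_ci_dim_0}, finite length of $Tor_i^R(G,N)$ for every $i\ge 1$; a depth count then kills $Tor_1^R(G,N)$. The module $L$ is obtained by iterating this hull-plus-pushforward step $c$ times, not by a single cosyzygy construction.
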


	\begin{proof}[Proof of Lemma \ref{lemma_3_7_new}] Consider the projective hull of $M$  (\ref{section_gorenstein}\ref{item_proj_hull}) \begin{align}\label{seq_proj_hull} 
			0 \rar M \rar X \rar G \rar 0.
		\end{align} Recall that by definition of a projective hull, $pd_R X = CI\mbox{-}dim_R(M)<\infty$ and $G$ is totally reflexive. By \ref{section_gorenstein_com}\ref{item_sather}, we can assume $CI\mbox{-}dim_R(G) = 0.$ Set $m = syz_R(M)$ and $n = syz_R(N).$ By Lemma \ref{lemma_depth_syz}\eqref{item_depth_c}, $X$ is an $m$'th syzygy of an $R$-module $X_1$ and $N$ is an $n$'th syzygy of an $R$-module $N_1.$ In particular, $Tor_i^R(X_1, N_1) = 0$ for all $i \gg 0.$ By Auslander-Buchsbaum formula, $pd_R(X_1) \leq depth_R(R),$ thus $Tor_i^R(X_1, N_1) = 0$ for all $i > depth_R(R).$  Thus, the hypothesis  \ref{item_2} implies, \begin{align} \label{tor_x_n_0} 
			Tor_i^R(X,N) = 0 \ \text{for all }  i\geq 1. 
		\end{align} Applying Auslander's depth formula \ref{section_depth_formula} to the Tor-independent pair $(X,N),$ and using Lemma \ref{lemma_depth_syz}, we get \begin{align} \label{depth_x_otimes_n} 
			 \nonumber depth_R(X \otimes_R N) &= depth_R(X) + depth_R(N) - depth_R(R) \\  & \geq m + n - depth_R(R) \geq c.  
		\end{align}It follows from \eqref{seq_proj_hull} and \eqref{tor_x_n_0} that $Tor_i^R(M,N) \cong Tor_{i+1}^R(G,N)$ for all $i \geq 1.$ Thus, by hypothesis \ref{item_1} and \ref{section_gorenstein_com}\ref{item_ci_dim_0} \begin{align} \label{tor_length_g_n} 
			\lambda(Tor_i^R(G,N)) < \infty \text{ for all } i \geq 1.
		\end{align} 
		
		Tensoring \eqref{seq_proj_hull} with $N$, and using \eqref{tor_x_n_0} gives $0 \rar Tor_1^R(G,N) \rar M \otimes_R N.$ In view of \ref{item_3} and \eqref{tor_length_g_n}, we conclude that $Tor_1^R(G,N) = 0.$ Thus there exists a short exact sequence $0 \rar M \otimes_R N \rar X \otimes_R N \rar G \otimes_R N \rar 0.$ By \ref{item_3} and \eqref{depth_x_otimes_n}, $depth_R(G \otimes_R N) \geq c.$ It is clear that $Ext^i(M,N) \cong Ext^{i+1}(G,N)$ for all $i \gg 0,$ thus $cx_R(M,N) = cx_R(G,N).$ This shows that $G$ satisfies the following properties
		\begin{enumerate}
			\item $CI\mbox{-}dim_R(G) = 0.$
			\item $Tor_1^R(G,N) = 0.$
			\item $Tor_{i+1}^R(G,N) \cong Tor_i^R(M,N)$ for all $i \geq 1.$
			\item $cx_R(M,N) = cx_R(G,N).$ 
			\item $depth_R(G \otimes_R N) \geq c.$
		\end{enumerate}
		If $c= 0,$ we take $L = G.$ If $c \geq 1,$ we repeat this process by replacing $M$ by $G,$ taking the universal pushforward of $G,$ and replacing $c$ by $c-1.$ 
	\end{proof}

	\begin{proof}[Proof of theorem \ref{theorem_syz}] Assuming the contrary, we have an inequality	\begin{align}  \label{item_b} syz_R(M) + syz_R(N) > depth_R(R) + depth_R(M \otimes_R N).
		\end{align} Applying Lemma \ref{lemma_3_7_new}, with $c = cx_R(M,N),$ gives an $R$-module $L$ with
	\begin{enumerate}
		\item $CI\mbox{-}dim_R(L) = 0,$ 
		\item $Tor_i^R(L,N) = 0 \text{ for } i = 1,\ldots, cx_R(L,N) +1,$
		\item $Tor_{i+c+1}^R(L,N) \cong Tor_i^R(M,N) \text{ for all } i \geq 1.$
		\item $cx_R(M,N) = cx_R(L,N),$ 
	\end{enumerate} By Theorem \cite[3.4]{CST} applied to the pair $(L,N),$ it follows that $Tor_i^R(M,N) = 0$ for all $i \geq 1.$ Now  the depth formula and the inequality \ref{lemma_depth_syz}\ref{item_depth_b} gives a contradiction.

	\end{proof}

\begin{remark}
	The hypothesis on the lengths of $Tor$s as used in Theorems \ref{theorem_main_cst} and \ref{theorem_syz} is satisfied, for instance, if $R$ has an isolated singularity or if either of the modules has locally finite projective dimension over the punctured spectrum of $R.$
\end{remark}

	\subsection*{Acknowledgement} 
	We are grateful to Olgur Celikbas for the valuable feedback that helped in improving this article. We thank Dipankar Ghosh for many suggestions and, in particular, for pointing out an error in an earlier version of this manuscript. We thank Kaito Kimura, Yuya Otake, Ryo Takahashi and Shashi Ranjan Sinha for many useful comments and suggestions. We thank the anonymous referee for a careful reading and useful suggestions that have improved the exposition.  
	
	The first author is partially supported by a NET Senior Research Fellowship from UGC, MHRD, Govt. of India.

\end{document}